\numberwithin{equation}{section}
\newtheorem{theorem}{Theorem}[section]
\newtheorem{proposition}[theorem]{Proposition}
\theoremstyle{definition}
\newtheorem{remark}[theorem]{Remark}
\newtheorem{definition}[theorem]{Definition}
\newtheorem{example}[theorem]{Example}
\newcommand{\ZZ}{\mathbb{Z}}
\newcommand{\QQ}{\mathbb{Q}}
\newcommand{\CC}{\mathbb{C}}
\newcommand{\FF}{\mathbb{F}}
\newcommand{\proj}{\mathbf{P}}
\newcommand{\KK}{\mathcal{K}}
\newcommand{\YY}{\mathcal{Y}}
\newcommand{\JJ}{\mathcal{J}}
\newcommand{\Acal}{\mathcal{A}}
\newcommand{\Zcal}{\mathcal{Z}}
\newcommand{\DD}{\mathcal{D}}
\newcommand{\eps}{\varepsilon}
\DeclareMathOperator{\Jac}{Jac}
\DeclareMathOperator{\Spec}{Spec}
\DeclareMathOperator{\Res}{Res}
\DeclareMathOperator{\Sym}{Sym}
\DeclareMathOperator{\ev}{ev}
\DeclareMathOperator{\divis}{div}
\begin{document}

\bibliographystyle{abbrvnat}

\title[Explicit 2-cover descent for genus 2 curves]{Explicit two-cover descent for genus 2 curves}

\author[D.~R.~Hast]{Daniel Rayor Hast}
\address{Dept.\ of Mathematics and Statistics, Boston University\\Boston, MA 02215, USA}
\email{drhast@bu.edu}

\subjclass[2020]{Primary 11G30; Secondary 14G05, 11Y50}
\keywords{Rational points on curves, Chabauty's method, \'etale descent}

\begin{abstract}
  Given a genus 2 curve $C$ with a rational Weierstrass point defined over a number field, we construct a family of genus 5 curves that realize descent by maximal unramified abelian two-covers of $C$, and describe explicit models of the isogeny classes of their Jacobians as restrictions of scalars of elliptic curves. All the constructions of this paper are accompanied by explicit formulas and implemented in Magma and/or SageMath. We apply these algorithms in combination with elliptic Chabauty to a dataset of 7692 genus 2 quintic curves over $\QQ$ of Mordell--Weil rank 2 or 3 whose sets of rational points have not previously been provably computed. We analyze how often this method succeeds in computing the set of rational points and what obstacles lead it to fail in some cases.
\end{abstract}

\maketitle

\section{Introduction}
\label{section:intro}
Let $C$ be a nice (smooth, projective, geometrically integral) curve over a number field $k$. A central problem in arithmetic geometry is to determine the set of rational points $C(k)$. When $C$ is of genus at least two, by Faltings' theorem, $C(k)$ is a finite set \cite{Faltings83, Faltings84}; however, no general algorithm for provably computing $C(k)$ is currently known. (See \cite{Poonen02} for an overview.)

One common strategy for computing $C(k)$ is \emph{descent}, which involves finding a family of curves $D_\delta$ (with $\delta$ ranging over some computable finite set) together with maps $\varphi_\delta\colon D_\delta \to C$ with the property that $C(k) \subseteq \bigcup_\delta \varphi_\delta(D_\delta(k))$. In many cases, one can construct such families so that the covering curves $D_\delta$ are amenable to other techniques for determining the set of rational points that might not apply directly to $C$.

In this paper, we make explicit a particular descent construction for curves of genus two with a rational Weierstrass point. All the constructions involved are implemented in Magma \cite{Magma} and/or SageMath \cite{sagemath}; source code is available online at \cite{Hast22code}. We then apply these algorithms to all 7692 genus 2 curves over $\QQ$ with a rational Weierstrass point and Mordell--Weil rank 2 or 3 in the database of genus $2$ genus computed by Booker, Sijsling, Sutherland, Voight, and Yasaki \cite{genus2-database} (available at \cite{LMFDB}) and analyze the results.

The general strategy is inspired by prior work of Bruin, Flynn, Stoll, and Wetherell computing rational points on curves using explicit descent constructions; see, for example, \cite{FW99, FW01, Bruin02, BF05}. The constructions of this paper are closely related to those of Bruin and Stoll \cite{Bruin-Stoll} for two-cover descent on arbitrary hyperelliptic curves $C\colon y^2 = f(x)$; we construct genus $5$ quotients of their two-covering curves along with explicit formulas (as a restriction of scalars of an elliptic curve) for a model of the isogeny class of the Jacobian of these quotients. The elliptic curves constructed in this way are isomorphic to those arising from degree four factors of $f$ as discussed in \cite[§8]{Bruin-Stoll}.

From now on, suppose $C$ is of genus $2$ and has a $k$-rational Weierstrass point. Then $C$ has an affine model given by an equation $y^2 = f(x)$, where $f$ has degree exactly $6$, is squarefree, and has a rational root $\alpha$. Thus, elements of $C(k)$ correspond to solutions in $k$ to the equation $y^2 = f(x)$, with the possible addition of two rational points at infinity that are excluded from the affine model. (More precisely, $C$ has two points at infinity, and these points are rational if and only if the leading coefficient of $f$ is a square in $k$.)

Let $J = \Jac(C)$ be the Jacobian variety of $C$. This is an abelian surface whose points correspond to degree zero divisors on $C$ modulo linear equivalence. Embed $C$ in $J$ by the Abel--Jacobi map $P \mapsto (P) - ( (\alpha, 0) )$ associated to the given Weierstrass point $(\alpha, 0)$. Since the chosen base point is a Weierstrass point, multiplication by $-1$ on $J$ induces the hyperelliptic involution $i\colon C \to C$.

A natural $16$-covering of $C$ is given by pullback along $[2]\colon J \to J$, where $[2]$ is multiplication by $2$ in $J$: Let $W = [2]^{-1}(C)$. Then $[2]\colon W \to C$ is a degree $16$ \'etale covering, so by the Riemann--Hurwitz formula \cite[Ch.~IV, Cor.~2.4]{Hartshorne77}, the curve $W$ has genus $17$. In order to compute $C(k)$ via descent using this covering, we would need to do the following:
\begin{enumerate}
  \item compute a finite set of twists $\varphi_\delta\colon W_\delta \to C$ such that $C(k) \subseteq \bigcup_{\delta} \varphi_\delta(W_\delta(k))$; and
  \item compute $W_\delta(k)$ for each twist $W_\delta$.
\end{enumerate}

To make the computations more tractable, it is useful to work instead with a suitable quotient of $W$. Since multiplication by $-1$ on $J$ induces the hyperelliptic involution on $C$, we can lift the hyperelliptic involution to an involution on $W$. Let $Z$ be the quotient of $W$ by this involution. The map $W \to Z$ is ramified exactly at the $2$-torsion points of $J$, of which there are $16$, so $Z$ has genus $5$ by the Riemann--Hurwitz formula. (Another model for this curve can be constructed using the methods of \cite[§3.1]{Bruin02}; we choose this approach to emphasize the connection with Kummer surfaces.)

The purpose of this paper is to give explicit, computationally tractable formulas for $Z$ and its Jacobian (and their twists), along with the associated maps realizing the correspondence with $C$; to apply these constructions in combination with the elliptic Chabauty method to the aforementioned large dataset of curves; and to determine what the obstructions are in the cases where it does not succeed. The key ingredient is to embed (twists of) $Z$ in (twists of) the desingularized Kummer surface of $J$. Our primary references for the requisite explicit descriptions of Kummer surfaces and their twists are \cite{Cassels-Flynn} and \cite{FTvL12}.

In section \ref{section:curves}, we provide the necessary background on desingularized twisted Kummer surfaces, construct the canonical embedding of $Z$ and its twists as hyperplane sections of these surfaces, and describe the primes of bad reduction. In section \ref{section:map}, we prove an explicit formula for the twisted duplication map and describe its ramification divisor. In section \ref{section:genus-one}, we construct a map to a genus one curve through which the twisted duplication map factors, which supplies the necessary data to apply the elliptic Chabauty method \cite{Bruin03}; we also use this to give an explicit model for the Jacobian of $Z$ up to isogeny. In section \ref{section:results}, we report on the results of applying this method to the aforementioned dataset of 7692 curves; the method succeeds for 1045 of these curves, and we analyze the obstacles encountered for the remaining curves. Finally, in section \ref{section:examples}, we analyze the method and results in detail for several examples.

\section{Genus 5 curves in twisted Kummer surfaces}
\label{section:curves}
Let $k$ be a field not of characteristic $2$. Let $C\colon y^2 = f(x)$ be a genus $2$ curve over $k$ with $\deg(f) = 6$ such that $C$ has a $k$-rational Weierstrass point $(\alpha, 0)$. (Although such a curve does have a quintic model over $k$, we work with sextic models in order to use the explicit description of desingularized twisted Kummer surfaces outlined below.) Let $i\colon C \to \proj^1$ be the canonical map. Let $f_0, f_1, \dots, f_6 \in k$ and $\gamma_1, \gamma_2, \dots, \gamma_6 \in k$ such that
\[
f(x) = \sum_{i=0}^{6} f_i x^i = (x - \alpha) (\gamma_1 + \gamma_2 x + \dots + \gamma_6 x^5).
\]

Let $J$ be the Jacobian variety of $C$. Let $L = k[X]/\langle f(X) \rangle$, and let $\delta = \sum_{i=0}^{5} d_i X^i \in L^*$ be arbitrary. When $k$ is a global or local field, Flynn, Testa, and van Luijk \cite[§7]{FTvL12} construct a twist $\pi_\delta\colon A_\delta \to J$ of the multiplication-by-$2$ map $[2]\colon J \to J$, depending up to isomorphism only on the class of $\delta$ in $L^*/{L^*}^2 k^*$, whose class in $H^1(k, J[2])$ maps to $\delta$ under the Cassels map $\mu\colon J(k)/2J(k) \to L^*/{L^*}^2 k^*$. They also show that every two-covering of $J$ that has a $k$-rational point arises in this way \cite[Prop.~2.15]{FTvL12}, and so if $\Delta \subseteq L^*$ is any subset whose image in $L^*/{L^*}^2 k^*$ contains the image of the Cassels map $\mu$, we have
\[
J(k) = \bigcup_{\delta \in \Delta} \pi_\delta(A_\delta(k)).
\]

Each surface $A_\delta$ is equipped with a natural involution $\iota_\delta\colon A_\delta \to A_\delta$ lifting $[-1]\colon J \to J$. The (twisted) Kummer surfaces $\KK_\delta = A_\delta/\langle \iota_\delta \rangle$ have $16$ simple nodes. For computational purposes, it turns out to be more convenient to work with their minimal desingularizations $\YY_\delta$. Let $p_\delta\colon A_\delta \dashrightarrow \YY_\delta$ be the rational quotient map. Let $W_\delta = \pi_\delta^{-1}(C)$, where we embed $C$ in $J$ by the Abel--Jacobi map $P \mapsto (P) - ( (\alpha, 0) )$. By the Riemann--Hurwitz formula, $W_\delta$ has genus $17$. Let
\[
Z_\delta = p_\delta(W_\delta),
\]
and let
\[
\bar{\pi}_\delta\colon Z_\delta \to \proj^1
\]
be the map defined by sending a general point $Q \in Z_\delta$ to $i(\pi_\delta(\tilde{Q}))$, where $\tilde{Q} \in A_\delta$ is such that $p_\delta(\tilde{Q}) = Q$. (Since $C$ is embedded in $J$ via an Abel--Jacobi map whose base point is a Weierstrass point, the hyperelliptic involution on $C$ lifts to $\iota_\delta$ on $A_\delta$, so this is well-defined.)

Thus $Z_\delta$ fits into a commutative diagram
\begin{equation}
  \label{eqn:main-diagram}
  \xymatrix{
    & W_\delta \ar[dl]_{p_\delta} \ar[dr]^{\pi_\delta} \\
    Z_\delta \ar[dr]^{\bar{\pi}_\delta} & & C \ar[dl]_{i} \\
    & \proj^1
  }
  \qquad\qquad
  \xymatrix{
    & A_\delta \ar@{-->}[dl]_{p_\delta} \ar[dr]^{\pi_\delta} \\
    \YY_\delta \ar[dr]^{\bar{\pi}_\delta} & & J \ar[dl] \\
    & \KK 
  }
\end{equation}
where each curve in the left diagram embeds into the corresponding surface in the right diagram, and $\KK = J/[-1]$ is the Kummer surface of $J$.

We reproduce here the model of $\YY_\delta$ constructed in \cite[§4]{FTvL12} as the complete intersection of three quadrics in $\proj^5 = \proj(L)$ (recall that $L = k[X]/\langle f(X) \rangle$), which we have implemented in both SageMath and Magma.

\begin{definition}
\label{def:kummer}
Write $f(X) = \sum_{i=0}^{6} f_i X^i$ and $\delta = \sum_{i=0}^{5} d_i X^i \in L$. Let
\[
R = \begin{pmatrix}
  0 & 0 & 0 & 0 & 0 & -f_0 f_6^{-1} \\
  1 & 0 & 0 & 0 & 0 & -f_1 f_6^{-1} \\
  0 & 1 & 0 & 0 & 0 & -f_2 f_6^{-1} \\
  0 & 0 & 1 & 0 & 0 & -f_3 f_6^{-1} \\
  0 & 0 & 0 & 1 & 0 & -f_4 f_6^{-1} \\
  0 & 0 & 0 & 0 & 1 & -f_5 f_6^{-1}
\end{pmatrix} \qquad \text{and} \qquad T = \begin{pmatrix}
  f_1 & f_2 & f_3 & f_4 & f_5 & f_6 \\
  f_2 & f_3 & f_4 & f_5 & f_6 & 0 \\
  f_3 & f_4 & f_5 & f_6 & 0 & 0 \\
  f_4 & f_5 & f_6 & 0 & 0 & 0 \\
  f_5 & f_6 & 0 & 0 & 0 & 0 \\
  f_6 & 0 & 0 & 0 & 0 & 0
\end{pmatrix}.
\]
Let $g_1, \dots, g_6$ be the basis of $L$ defined by
\begin{align*}
  g_1 &= f_1 + f_2 X + f_3 X^2 + f_4 X^3 + f_5 X^4 + f_6 X^5, \\
  g_2 &= f_2 + f_3 X + f_4 X^2 + f_5 X^3 + f_6 X^4, \\
  g_3 &= f_3 + f_4 X + f_5 X^2 + f_6 X^3, \\
  g_4 &= f_4 + f_5 X + f_6 X^2, \\
  g_5 &= f_5 + f_6 X, \\
  g_6 &= f_6,
\end{align*}
and let $v_1, \dots, v_6$ be the dual basis of $\hat{L}$. For $j \geq 0$, let $Q_j^{(\delta)}$ be the quadratic form corresponding to the symmetric matrix $\sum_{i=0}^{5} f_6 d_i R^{i+j} T$ in the basis $v_1, \dots, v_6$. Let $\YY_\delta \subset \proj(L)$ be defined by
\[
Q_0^{(\delta)} = Q_1^{(\delta)} = Q_2^{(\delta)} = 0.
\]
\end{definition}
Flynn, Testa, and van Luijk show that $\YY_\delta$ is indeed the minimal desingularization of $\KK_\delta$ \cite[§7]{FTvL12}.

\begin{theorem}
  \label{thm:embedding}
  Let $\ev_\alpha\colon L \to k$ be the homomorphism defined by $\ev_\alpha(\xi) = \xi(\alpha)$. The curve $Z_\delta$ is the intersection of $\YY_\delta$ with the hyperplane $\proj(\ker(\ev_\alpha)) \subset \proj(L)$, which is given in coordinates by
  \[
  \gamma_1 v_1 + \gamma_2 v_2 + \gamma_3 v_3 + \gamma_4 v_4 + \gamma_5 v_5 + \gamma_6 v_6 = 0,
  \]
  where as above, $\gamma_1, \dots, \gamma_6$ are the coefficients of $f(x)/(x - \alpha)$.
\end{theorem}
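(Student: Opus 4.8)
The statement contains two parts: the intrinsic identity $Z_\delta = \YY_\delta \cap \proj(\ker(\ev_\alpha))$, and the description of $\proj(\ker(\ev_\alpha))$ in the coordinates dual to $g_1,\dots,g_6$. The plan is to dispose of the second part by a direct computation: since $v_1,\dots,v_6$ is the basis of $\hat L$ dual to $g_1,\dots,g_6$, any $\phi\in\hat L$ satisfies $\phi = \sum_{j=1}^{6}\phi(g_j)\,v_j$, so it suffices to check $\ev_\alpha(g_j) = g_j(\alpha) = \gamma_j$ for each $j$; comparing $g_j = f_j + f_{j+1}X + \dots + f_6 X^{6-j}$ with the recursion $\gamma_6 = f_6$, $\gamma_j = f_j + \alpha\gamma_{j+1}$ obtained by equating coefficients in $f(x) = (x-\alpha)(\gamma_1+\gamma_2 x+\dots+\gamma_6 x^5)$ makes this immediate. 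The remaining content is the geometric identity. I will abbreviate $H := \proj(\ker(\ev_\alpha))$ and use that $\ker(\ev_\alpha)$ is exactly the principal ideal $(X-\alpha)L\subset L$, so $H = \proj((X-\alpha)L)$. I would then prove the two inclusions $Z_\delta\subseteq\YY_\delta\cap H$ and $\YY_\delta\cap H\subseteq Z_\delta$ separately; since $Z_\delta$ and $H$ are defined over $k$, it is harmless to argue after base change to $\bar k$.

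For $Z_\delta\subseteq H$, the plan is to feed the parametrization of $C\subset J$ into the Flynn--Testa--van Luijk formula for the composite $W_\delta\hookrightarrow A_\delta\xrightarrow{p_\delta}\YY_\delta\subset\proj(L)$. A point of $W_\delta$ lies over a point of $C\subset J$, namely the class $[(P)-((\alpha,0))]$ of some $P\in C$; because $(\alpha,0)$ is a Weierstrass point, this class is represented by the effective divisor $(P)+((\alpha,0))$, whose Cassels \emph{$x-X$} invariant is $(x(P)-X)(\alpha-X)\in L$. The Flynn--Testa--van Luijk coordinates of the corresponding point of $Z_\delta$ in $\proj(L)$ should work out to be proportional to $\delta\,(x(P)-X)(\alpha-X)$, and since the factor $\alpha-X = -(X-\alpha)$ occurs to first order, this element lies in $(X-\alpha)L=\ker(\ev_\alpha)$, so the point lies in $H$. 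Conceptually this records the invariance of the whole construction under the symmetry of the six roots of $f$: the six coordinate hyperplanes $\proj(\ker(\ev_{\alpha_i}))$ of $\proj(L)$ correspond to the six Weierstrass base points, $\alpha$ being the singled-out one — and it is essential that $(\alpha,0)$ is a Weierstrass point, since for a non-Weierstrass base point the analogous invariant is a unit in $L$ and imposes no vanishing condition. The main obstacle I anticipate is precisely this step: extracting from the explicit Flynn--Testa--van Luijk coordinates that choosing $(\alpha,0)$ as Abel--Jacobi base point is recorded by the vanishing of the $\ev_\alpha$-coordinate on all of $W_\delta$.

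For the reverse inclusion I would argue by intersection theory on $\YY_\delta$, which — being a minimal desingularization of a Kummer surface — is a smooth K3 surface, embedded in $\proj(L)\cong\proj^5$ as a complete intersection of three quadrics; thus $\OO_{\YY_\delta}(1)$ is ample with $\OO_{\YY_\delta}(1)^2 = \deg\YY_\delta = 8$ and $K_{\YY_\delta}\cong\OO_{\YY_\delta}$. Set $\Gamma := \YY_\delta\cap H$, the divisor on $\YY_\delta$ cut out by the linear form defining $H$, so that $\Gamma\in|\OO_{\YY_\delta}(1)|$ and $\Gamma$ is connected by the Lefschetz hyperplane theorem. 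By the first inclusion $Z_\delta\subseteq\Gamma$; moreover $Z_\delta$ is irreducible (the curve $W_\delta$ being so) and smooth of genus $5$ (by Riemann--Hurwitz for the double cover $W_\delta\to Z_\delta$ ramified at the $16$ fixed points of $\iota_\delta$, as in the discussion preceding the theorem), so adjunction on $\YY_\delta$ gives $Z_\delta^2 = 2g(Z_\delta)-2 = 8$. Writing $\Gamma = mZ_\delta + \Gamma'$ with $m\ge 1$ and $\Gamma'$ an effective divisor not containing $Z_\delta$ in its support, I get $\OO_{\YY_\delta}(1)\cdot Z_\delta = \Gamma\cdot Z_\delta = mZ_\delta^2 + \Gamma'\cdot Z_\delta \ge 8m$, while $8 = \OO_{\YY_\delta}(1)\cdot\Gamma = m\,(\OO_{\YY_\delta}(1)\cdot Z_\delta) + \OO_{\YY_\delta}(1)\cdot\Gamma' \ge 8m^2$; hence $m=1$, and then $\OO_{\YY_\delta}(1)\cdot\Gamma' = 8 - \OO_{\YY_\delta}(1)\cdot Z_\delta \le 0$, forcing $\Gamma' = 0$ by ampleness. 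Thus $\Gamma = Z_\delta$ as divisors, and since $\Gamma$ is an effective Cartier divisor on the smooth surface $\YY_\delta$ whose associated cycle is the reduced irreducible curve $Z_\delta$, it is reduced, so $\Gamma = Z_\delta$ as closed subschemes. Together with the coordinate computation of the first paragraph, this yields the theorem.
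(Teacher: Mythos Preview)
Your coordinate computation for the hyperplane is correct and matches the paper's. The gap is in the forward inclusion $Z_\delta\subseteq H$: the formula you propose for the $\proj(L)$-coordinate of a point of $Z_\delta$ cannot be right. If the coordinate $\xi$ of $p_\delta(Q)$ were proportional to $\delta\,(x(P)-X)(\alpha-X)$ for every $Q\in W_\delta$ lying over $[(P)-((\alpha,0))]\in C$, then $p_\delta|_{W_\delta}$ would factor through $\pi_\delta|_{W_\delta}\colon W_\delta\to C$ (your formula depends only on $x(P)$), forcing $\bar\pi_\delta\colon Z_\delta\to\proj^1$ to have degree at most $2$; but it has degree $16$. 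The actual relationship, which the paper extracts from Flynn--Testa--van Luijk, is that $\delta\,\xi^2\equiv c\,(X-\alpha)(X-x(P))\pmod f$ for some nonzero scalar $c$: the coordinate $\xi$ is a \emph{square root} of the Cassels invariant (divided by $\delta$), and the many square roots in $L$ account for the many points of $Z_\delta$ over a given point of $\proj^1$. This still yields $\delta(\alpha)\,\xi(\alpha)^2=0$ and hence $\xi(\alpha)=0$, so the inclusion does follow---but through $\xi^2$, not $\xi$. The paper in fact uses this $\delta\xi^2$ description to obtain both inclusions at once: for $P\in\YY_\delta$ off the exceptional curves with lift $\xi$, one has $\bar\pi_\delta(P)\in T_\alpha$ if and only if $\xi(\alpha)=0$, where $T_\alpha\subset\KK$ is the trope conic corresponding to $(\alpha,0)$.

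Your intersection-theoretic argument for the reverse inclusion is a genuinely different route and is essentially correct; it has the pleasant feature of establishing directly that the hyperplane section is reduced and irreducible, something the paper's pointwise characterization (valid only away from the exceptional curves) leaves implicit. One small repair: your adjunction step invokes $Z_\delta^2=2g(Z_\delta)-2=8$, which presupposes smoothness of $Z_\delta$, whereas Riemann--Hurwitz only gives the geometric genus of the normalization. All you actually need is the inequality $Z_\delta^2=2p_a(Z_\delta)-2\ge 2g(Z_\delta)-2=8$, and your chain of inequalities then goes through verbatim; smoothness follows a posteriori from $p_a=5=g$.
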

\begin{proof}
  Note that $\ev_\alpha$ is well-defined since $f(\alpha) = 0$. The first step is to understand how the base $\proj^1$ embeds into $\KK$. Recall that a \emph{trope} of a quartic surface $\KK \subset \proj^3$ is a tangent plane that intersects the surface at a conic (with multiplicity two). A Kummer surface has exactly $16$ tropes, which are the projective duals of the $16$ nodes \cite[§3.7]{Cassels-Flynn}\cite[§8-9]{Hudson}.

  The description of six of the tropes of the Kummer surface given in \cite[§7.6]{Cassels-Flynn} shows that the image of $\proj^1$ in $\KK$ corresponding to the Abel--Jacobi map with base point $(\alpha, 0)$ is contained in the trope $T_\alpha$ with equation $\alpha^2 \kappa_1 - \alpha \kappa_2 + \kappa_3 = 0$, where $\kappa_1, \kappa_2, \kappa_3, \kappa_4$ are the coordinates of the usual embedding of $\KK$ as a quartic surface in $\proj^3$ (as described in \cite[§3.1]{Cassels-Flynn}, using the letters $\xi_i$ instead of $\kappa_i$). (Strictly speaking, $T_\alpha$ intersects $\KK$ in \emph{twice} the conic corresponding to the base $\proj^1$; we also denote this conic by $T_\alpha$.)

  Instead of constructing $Z_\delta$ as the quotient of a genus $17$ curve embedded in $J$, we compute $Z_\delta$ using maps between the twisted Kummer surfaces. The preimage of $T_\alpha$ under the map $\bar{\pi}_\delta\colon \YY_\delta \to \KK$ also contains some exceptional divisors, which we wish to omit when constructing $Z_\delta$, so we instead consider the condition for a \emph{general} element of $\YY_\delta$ to map to $T_\alpha$ via the map $\bar{\pi}_\delta$.

  Let $P \in \YY_\delta$ be an arbitrary point not contained in the locus of indeterminacy of the rational map $p_\delta\colon A_\delta \dashrightarrow \YY_\delta$. Let $\xi \in L$ be an arbitrary lift of $P$ from $\proj(L)$ to $L$. We will show that $\bar{\pi}_\delta(P) \in T_\alpha$ if and only if $\xi(\alpha) = 0$.

  We first treat the untwisted case $\delta = 1$. Let $D = ( (x_1, y_1) ) + ( (x_2, y_2) ) - K_C$ such that $p_1([D]) = [\pm D] = P$. As explained in the paragraph preceding \cite[Prop.~4.11]{FTvL12}, the $x$-coordinates of the points $R_1$ and $R_2$ such that $2D \sim R_1 + R_2 - K_C$ are the roots of the quadratic polynomial $H(X)$ corresponding to $\xi^2$. (This does not depend on the choice of lift $\xi \in L$ since choosing a different lift multiplies $H$ by an element of $k^*$, which does not change the roots.) The condition that $\pm 2D$ is contained in $T_\alpha$ is exactly that one of the $x$-coordinates of $R_1$ and $R_2$ is $\alpha$, i.e., that $\alpha$ is a root of $H$, or equivalently, that $\xi^2(\alpha) = 0$, which is the case if and only if $\xi(\alpha) = 0$.

  Now we handle the twisted case. Let $D = ( (x_1, y_1) ) + ( (x_2, y_2) ) - K_C$ such that $p_\delta([D]) = P$. Let $k^s$ be a separable closure of $k$, let $L^s = L \otimes_k k^s$, and let $\eps \in L^s$ such that $\eps^2 = \delta$. Then $\delta \xi^2 = (\eps \xi)^2$, so $\eps \xi \in \YY_1$. Let $D' \in J$ such that the image of $D'$ in $\YY_1$ is $\eps \xi$. By \cite[§7]{FTvL12}, we have $\bar{\pi}_\delta = [2] \circ g$, where $g$ is defined by multiplication by $\eps$ in $L$. So
  \[
  \bar{\pi}_\delta(\xi) = \bar{\pi}_\delta(g^{-1}(\eps \xi)) = [2](\eps \xi),
  \]
  and lifting to the Jacobian, the divisor class corresponding to $\bar{\pi}_\delta(\xi)$ is equal to $[2D']$, i.e., $\pi_\delta([D]) = [2D']$. As in the previous paragraph, the roots $r_1, r_2$ of the quadratic polynomial $H$ such that $\delta \xi^2 \equiv H \pmod{f}$ are the $x$-coordinates of points $R_1, R_2 \in C$ such that $(R_1) + (R_2) - K_C \sim 2D'$. Thus, $\bar{\pi}_\delta(P) \in T_\alpha$ if and only if $\alpha$ is a root of $H$, which is equivalent to $\delta(\alpha) \xi^2(\alpha) = 0$. Since $\delta \in L^*$, we have $\delta(\alpha) \neq 0$, so this is equivalent to $\xi(\alpha) = 0$.

  Represent $\xi$ in the basis $g_1, \dots, g_6$ as $\xi = \sum_{i=1}^{6} v_i(\xi) g_i$. Then
  \[
  \xi(\alpha) = \sum_{i=1}^{6} g_i(\alpha) v_i(\xi),
  \]
  so in the basis $v_1, \dots, v_6$, the condition $\xi(\alpha) = 0$ becomes
  \[
  g_1(\alpha) v_1 + g_2(\alpha) v_2 + g_3(\alpha) v_3 + g_4(\alpha) v_4 + g_5(\alpha) v_5 + g_6(\alpha) v_6 = 0.
  \]
  It follows immediately from the definitions of $g_1, \dots, g_6$ that
  \[
  f(x) = (x - \alpha) (g_1(\alpha) + g_2(\alpha) x + g_3(\alpha) x^2 + g_4(\alpha) x^3 + g_5(\alpha) x^4 + g_6(\alpha) x^5),
  \]
  i.e., $\gamma_i = g_i(\alpha)$ for each $i \in \{1, \dots, 6\}$, so $Z_\delta$ is in fact the hyperplane section of $\YY_\delta$ whose coefficients in the basis $v_1, \dots, v_6$ are the coefficients of the polynomial $f(x)/(x - \alpha)$.
\end{proof}

\begin{proposition}
  \label{prop:smooth}
  The curve $Z_\delta$ is smooth, has genus $5$, and is canonically embedded in $\proj(\ker(\ev_{\alpha})) \cong \proj^4$.
\end{proposition}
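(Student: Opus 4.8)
The plan is to combine Theorem~\ref{thm:embedding}, which realizes $Z_\delta$ as the hyperplane section $\YY_\delta \cap \proj(\ker(\ev_\alpha))$, with the description of $\YY_\delta$ from Definition~\ref{def:kummer} as a smooth complete intersection of three quadrics in $\proj^5 = \proj(L)$ (smoothness being part of what Flynn, Testa, and van Luijk prove, since $\YY_\delta$ is a desingularized Kummer surface). First I would record the adjunction data of this complete intersection: the hyperplane class $H$ satisfies $H^2 = 2^3 = 8$, the canonical bundle $\omega_{\YY_\delta}$ is trivial, and $H^1(\YY_\delta, \OO_{\YY_\delta}) = 0$ (the latter two because $\YY_\delta$ is a smooth complete intersection of three quadrics in $\proj^5$, with $2+2+2-5-1 = 0$; in other words $\YY_\delta$ is a K3 surface). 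Consequently, for the divisor $Z_\delta \in |H|$, adjunction gives $\omega_{Z_\delta} \cong \OO_{Z_\delta}(1)$ and arithmetic genus $p_a(Z_\delta) = 1 + \tfrac{1}{2} H^2 = 5$. Moreover $Z_\delta$ is geometrically integral, being the closure of the image of the geometrically integral curve $W_\delta$ under $p_\delta$ (equivalently, it is connected as an ample divisor on the surface $\YY_\delta$ with $H^1(\YY_\delta, \OO_{\YY_\delta}(-H)) = 0$).

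The decisive step is \textbf{smoothness}, which is not automatic because $\proj(\ker(\ev_\alpha))$ is a specific, not a generic, hyperplane; for this I would compare arithmetic and geometric genus. For a geometrically integral projective curve, $p_a - p_g = \sum_P \delta_P$ with each $\delta_P \ge 0$, vanishing exactly at the smooth points, so it suffices to show $p_g(Z_\delta) = 5$. The rational map $p_\delta\colon A_\delta \dashrightarrow \YY_\delta$ is generically two-to-one with fibers the orbits of $\iota_\delta$; since $W_\delta$ is $\iota_\delta$-stable (the image of $C$ in $J$ is stable under $[-1]$, as the Abel--Jacobi base point is a Weierstrass point) and $\iota_\delta$ acts nontrivially on $W_\delta$, the induced morphism $W_\delta/\langle\iota_\delta\rangle \to Z_\delta$ is a birational morphism from a smooth curve, hence the normalization of $Z_\delta$. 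The double cover $W_\delta \to W_\delta/\langle\iota_\delta\rangle$ ramifies exactly at the $16$ points of $A_\delta[2]$, all of which lie on $W_\delta$ since $\pi_\delta$ sends them to $0 \in C$; so Riemann--Hurwitz and $g(W_\delta) = 17$ give $p_g(Z_\delta) = g(W_\delta/\langle\iota_\delta\rangle) = 5$, exactly the computation already carried out for $W \to Z$ in \S\ref{section:intro}. Hence $p_g(Z_\delta) = p_a(Z_\delta)$, so $Z_\delta$ is smooth. (Since smoothness and the genus may be checked after base change, one could instead reduce to the untwisted curve $Z_1$: multiplication by a square root of $\delta$ gives an isomorphism $\YY_\delta \otimes \bar{k} \cong \YY_1 \otimes \bar{k}$ carrying $\proj(\ker(\ev_\alpha))$ to itself.)

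Finally, for the \textbf{canonical embedding}, I would use that the complete intersection $\YY_\delta$ is projectively normal and nondegenerate, so $H^0(\proj^5, \OO(1)) \xrightarrow{\sim} H^0(\YY_\delta, \OO_{\YY_\delta}(1))$, which is $6$-dimensional. Writing $\ell$ for the linear form defining the hyperplane, the exact sequence $0 \to \OO_{\YY_\delta} \xrightarrow{\ell} \OO_{\YY_\delta}(1) \to \OO_{Z_\delta}(1) \to 0$ together with $H^1(\YY_\delta, \OO_{\YY_\delta}) = 0$ shows that $H^0(\YY_\delta, \OO_{\YY_\delta}(1)) \to H^0(Z_\delta, \OO_{Z_\delta}(1))$ is surjective with kernel spanned by $\ell$. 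Therefore $H^0(\proj^4, \OO(1)) = H^0(\proj^5, \OO(1))/\langle\ell\rangle \xrightarrow{\sim} H^0(Z_\delta, \OO_{Z_\delta}(1)) = H^0(Z_\delta, \omega_{Z_\delta})$, which is $5$-dimensional. This is precisely the assertion that the given embedding $Z_\delta \hookrightarrow \proj^4 = \proj(\ker(\ev_\alpha))$ restricts $\OO(1)$ to $\omega_{Z_\delta}$ and exhausts the linear system $|\omega_{Z_\delta}|$, i.e., is the canonical embedding; in particular $g(Z_\delta) = h^0(\omega_{Z_\delta}) = 5$, in agreement with the arithmetic genus. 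I expect the smoothness step to be the main obstacle; everything else follows formally from $\YY_\delta$ being a K3 surface realized as a complete intersection of quadrics in $\proj^5$.
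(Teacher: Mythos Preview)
Your argument is correct, but the paper proceeds quite differently and much more briefly. For smoothness, the paper does not compare $p_a$ with $p_g$; instead it works on the \emph{singular} Kummer surface $\KK_\delta$. The quotient map $q_\delta\colon A_\delta \to \KK_\delta$ ramifies exactly along $\pi_\delta^{-1}(0)$, and since $q_\delta$ has degree $2$, each image point is at worst a simple node of $\KK_\delta$. The desingularization $\YY_\delta \to \KK_\delta$ blows up these nodes, so the proper transform $Z_\delta$ of $q_\delta(W_\delta)$ is smooth. Once smoothness is in hand, the paper simply invokes the classical fact (Hartshorne, Ch.~IV, Ex.~5.5.3) that a smooth complete intersection of three quadrics in $\proj^4$ is a canonically embedded genus~$5$ curve, bypassing both your adjunction/K3 computation and your exact-sequence argument for completeness of $|\omega_{Z_\delta}|$.

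Your route has the virtue of being entirely numerical and self-contained: it never touches the singular model $\KK_\delta$ or the local analysis of how a curve through a surface node behaves under blowup, and it makes the Riemann--Hurwitz count from \S\ref{section:intro} do double duty by pinning down $p_g$. The paper's route, by contrast, is shorter and more geometric, but tacitly uses that the only singularities of $q_\delta(W_\delta)$ lie over the nodes of $\KK_\delta$ and are resolved by a single blowup---facts that are true but are asserted rather than checked. One small notational point: your ``$A_\delta[2]$'' is a mild abuse since $A_\delta$ is a torsor; what you mean (and use) is the fixed locus of $\iota_\delta$, which is indeed $\pi_\delta^{-1}(0)$ and hence lies on $W_\delta$.
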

\begin{proof}
  Let $q_\delta\colon A_\delta \to \KK_\delta$ be the quotient map. The ramification divisor of $q_\delta$ is $\pi_\delta^{-1}(0)$. At each point $P \in \pi_\delta^{-1}(0)$, since $q_\delta$ has degree $2$, the point $q_\delta(P)$ is either nonsingular or a simple node. The map $\YY_\delta \to \KK_\delta$ is given by blowing up at $q_\delta(\pi_\delta^{-1}(0))$, which desingularizes any simple nodes, so $Z_\delta$ (being the proper transform of $q_\delta(W_\delta)$) is smooth.

  By Theorem \ref{thm:embedding}, the curve $Z_\delta$ is a complete intersection of three quadrics in $\proj^4$, so $Z_\delta$ is a canonical curve of genus $5$ (cf.\ \cite[Ch.~IV, Ex.~5.5.3]{Hartshorne77}).
\end{proof}

\begin{proposition}
  \label{prop:good-reduction}
  Suppose that $k$ is a local field with residue field $\FF_q$, and that $C/k$ has good reduction. If $q$ is odd, then $Z_\delta$ also has good reduction.
\end{proposition}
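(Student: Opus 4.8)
The plan is to construct a smooth proper model of $Z_\delta$ over the valuation ring $\OO_k$ whose special fiber is the genus $5$ curve that Definition~\ref{def:kummer} and Theorem~\ref{thm:embedding} attach to the reduction of $C$; good reduction of $Z_\delta$ then follows. The first step is to fix integral data. Since $C$ has good reduction and $q$ is odd, after rescaling $(x,y)$ and applying, if necessary, a fractional linear change of the $x$-coordinate over $\OO_k$ that preserves the degree-$6$ shape, I may assume $f \in \OO_k[x]$ with $\deg f = 6$, $f_6 \in \OO_k^*$, and $\operatorname{disc}(f) \in \OO_k^*$; then $\alpha$ and the coefficients $\gamma_1, \dots, \gamma_6$ of $f(x)/(x-\alpha)$ all lie in $\OO_k$, with $\gamma_6 = f_6 \in \OO_k^*$. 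Because $\operatorname{disc}(f) \in \OO_k^*$, the ring $\OO_L := \OO_k[X]/\langle f(X)\rangle$ is the maximal order of $L$ and is finite étale over $\OO_k$, and $g_1, \dots, g_6$ is an $\OO_k$-basis of $\OO_L$ (the transition matrix to $1, X, \dots, X^5$ is anti-triangular with the unit $f_6$ along the anti-diagonal), so that $v_1, \dots, v_6$ is the dual $\OO_k$-basis. Finally, since $Z_\delta$ depends only on the class of $\delta$ in $L^*/{L^*}^2 k^*$, and the classes relevant to descent are unramified at the maximal ideal $\mathfrak{m}$ of $\OO_k$ (the image of the Cassels map lies in the unramified subgroup, because $J$ has good reduction and $q$ is odd), I may take the representative $\delta$ to lie in $\OO_L^*$; write $\delta = \sum d_i X^i$ with $d_i \in \OO_k$, so $\bar\delta := \delta \bmod \mathfrak{m}\OO_L$ is a unit of $\bar L := \OO_L/\mathfrak{m}\OO_L$.

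With this in place I would write down the model. Since $f_6 \in \OO_k^*$, the matrices $R$ and $T$ of Definition~\ref{def:kummer} have entries in $\OO_k$, hence so do the symmetric matrices $\sum_i f_6 d_i R^{i+j}T$, so $Q_0^{(\delta)}, Q_1^{(\delta)}, Q_2^{(\delta)}$ are quadratic forms over $\OO_k$ in the variables $v_1, \dots, v_6$. Using $\gamma_6 \in \OO_k^*$ to eliminate $v_6$ via the hyperplane equation of Theorem~\ref{thm:embedding}, they become three quadratic forms over $\OO_k$ in $v_1, \dots, v_5$, cutting out a closed subscheme $\mathscr{Z}_\delta \subset \proj^4_{\OO_k}$ with generic fiber $Z_\delta$. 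Every ingredient is compatible with reduction modulo $\mathfrak{m}$: the reductions $\bar R$, $\bar T$ are the matrices attached to $\bar C\colon y^2 = \bar f(x)$, and $\bar f$ is squarefree of degree $6$ over $\FF_q$ because $\operatorname{disc}(f) \in \OO_k^*$, so $\bar C$ is a genus $2$ curve over $\FF_q$ with a rational Weierstrass point and good reduction. Since $\bar\delta \in \bar L^*$, Definition~\ref{def:kummer} and Theorem~\ref{thm:embedding}, applied to $(\bar C, \bar\delta)$ over $\FF_q$, identify the special fiber $\mathscr{Z}_\delta \times_{\OO_k} \FF_q$ with the curve $Z_{\bar\delta}$; and since $q$ is odd, Proposition~\ref{prop:smooth} applies over $\FF_q$, so $Z_{\bar\delta}$ is a smooth, geometrically connected curve of genus $5$.

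To conclude: both fibers of $\mathscr{Z}_\delta \to \Spec\OO_k$ are smooth curves — the generic fiber $Z_\delta$ by Proposition~\ref{prop:smooth} (which applies since $\operatorname{char}(k) \neq 2$), the special fiber $Z_{\bar\delta}$ by the previous paragraph — so in particular both have dimension $4 - 3 = 1$; hence $\mathscr{Z}_\delta$ is a relative complete intersection of the expected fiber dimension, which over the discrete valuation ring $\OO_k$ forces flatness. A flat proper $\OO_k$-scheme with smooth fibers is smooth over $\OO_k$, so $\mathscr{Z}_\delta$ is a smooth proper model of $Z_\delta$, and therefore $Z_\delta$ has good reduction, with reduction $Z_{\bar\delta}$.

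The step I expect to require the most care is the identification of the special fiber: one must check that the integral normalizations are genuinely attainable (in particular that $\bar f$ stays squarefree of degree $6$, and that the class of $\delta$ admits a representative in $\OO_L^*$ — the latter being precisely where good reduction, through the unramifiedness of the relevant classes, is used) and that the Kummer-surface formulas of \cite{FTvL12} underlying Definition~\ref{def:kummer} and Theorem~\ref{thm:embedding} remain valid over the residue field, which is where $q$ odd enters a second time. Once the special fiber is identified with the genuine genus $5$ curve $Z_{\bar\delta}$, the flatness and smoothness of the model are formal.
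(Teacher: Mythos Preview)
Your argument is sound where it applies, and it yields more than the paper's: you explicitly identify the special fibre as the genus~$5$ curve $Z_{\bar\delta}$ attached to the reduced data $(\bar C,\bar\alpha,\bar\delta)$. The paper takes a quite different route, arguing through $W_\delta$ rather than through equations: since $W_\delta\to C$ is \'etale of degree $16$ prime to $p$, specialization of the prime-to-$p$ \'etale fundamental group (\cite[Expos\'e X, Cor.\ 3.9]{SGA1}) forces $W_\delta$ to have good reduction; local constancy of the Euler characteristic then bounds the arithmetic genus of $\bar Z_\delta$ above by $5$, and tame Riemann--Hurwitz for the double cover $\bar W_\delta\to\bar Z_\delta$ forces the geometric genus to be exactly $5$, hence smoothness. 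No integral model is ever written down, and $\delta$ is never reduced.

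There is, however, a genuine gap. The proposition is stated for \emph{arbitrary} $\delta\in L^*$, but your reduction to $\delta\in\OO_L^*$ is only justified for classes in the image of the Cassels map, not in general. With $L\cong\prod_i L_i$ and each $L_i/k$ unramified (which does follow from good reduction and $q$ odd), the image of $L^{*2}k^*$ under the total valuation map $L^*\to\bigoplus_i\ZZ/2\ZZ$ is just $\{0,(1,\dots,1)\}$; so any $\delta$ whose component valuations do not all share the same parity --- for instance, a uniformizer in the factor corresponding to $X-\alpha$ and a unit elsewhere --- admits no unit representative modulo $L^{*2}k^*$. For such $\delta$ one has $\bar\delta\notin\bar L^*$, Definition~\ref{def:kummer} and Proposition~\ref{prop:smooth} do not apply over $\FF_q$, and your identification of the special fibre breaks down. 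The paper's fundamental-group argument is insensitive to this because it never needs to reduce $\delta$. Your proof is therefore complete (and more informative) for the twists that actually occur in descent --- all that is used in \S\ref{section:examples} --- but as a proof of the proposition as stated it does not cover every case.
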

\begin{proof}
  Write $q = p^n$, where $p$ is an odd prime. By \cite[Expos\'e X, Cor.~3.9]{SGA1}, specialization to $\FF_q$ induces an isomorphism between the prime-to-$p$ parts of the \'etale fundamental groups of $C$ and the special fiber $\bar{C}/\FF_q$. Thus $W_\delta$, being a degree $16$ \'etale cover of $C$, also has good reduction. Euler characteristic (and hence also arithmetic genus) are locally constant in proper flat families \cite[§5, Cor.~1]{Mumford}, so Proposition \ref{prop:smooth} implies that the special fiber $\bar{Z}_\delta/\FF_q$ has arithmetic genus $5$, hence geometric genus at most $5$. Since $p$ is odd, the quotient map $\bar{W}_\delta \to \bar{Z}_\delta$ is tamely ramified, so the Riemann--Hurwitz formula implies that $\bar{Z}_\delta$ has geometric genus exactly $5$ and thus is smooth over $\FF_q$.
\end{proof}

\section{The twisted duplication map}
\label{section:map}
In this section, we give explicit formulas for the map $\bar{\pi}_\delta\colon Z_\delta \to \proj^1$ induced by the twisted duplication map. We also give an explicit description of the ramification divisor of this map.

\begin{theorem}
  \label{thm:map}
  For all $P \in Z_\delta$, we have
  \[
  \bar{\pi}_\delta(P) = \left( -(f_5 + f_6 \alpha) Q_3^{(\delta)}(P) - f_6 Q_4^{(\delta)}(P) : f_6 Q_3^{(\delta)}(P) \right) \in \proj^1.
  \]
\end{theorem}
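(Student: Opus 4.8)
The plan is to reduce the asserted formula to the explicit description, due to Flynn--Testa--van Luijk, of the duplication map $\bar{\pi}_\delta\colon\YY_\delta\to\KK$ in Cassels--Flynn coordinates, combined with the analysis of the trope $T_\alpha$ already carried out in the proof of Theorem~\ref{thm:embedding}. First I would recast the target geometrically. Keeping the notation of that proof, for a general $P\in Z_\delta$ lifted to $\xi\in\ker(\ev_\alpha)\subset L$, the point $\bar{\pi}_\delta(P)\in\KK$ is the class of $2D'$ with $(R_1)+(R_2)-K_C\sim 2D'$, where the roots of the quadratic $H$ attached to $\delta\xi^2$ are the $x$-coordinates of $R_1$ and $R_2$. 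Since $\xi(\alpha)=0$ and $f(\alpha)=0$, one of $R_1,R_2$ is the Weierstrass point $(\alpha,0)$, and using $K_C\sim 2\cdot((\alpha,0))$ one gets $\pi_\delta(\tilde{Q})=R_2$, so $\bar{\pi}_\delta(P)=i(R_2)$ is the $x$-coordinate of the \emph{other} point. Writing $\bar{\pi}_\delta(P)\in\KK$ in the Cassels--Flynn coordinates $(\kappa_1:\kappa_2:\kappa_3:\kappa_4)$ of \cite[\S3.1]{Cassels-Flynn} --- in which $\kappa_1 X^2-\kappa_2 X+\kappa_3$ is proportional to the polynomial with roots the two $x$-coordinates of $2D'$, so that the trope equation $\alpha^2\kappa_1-\alpha\kappa_2+\kappa_3=0$ holds automatically on $\bar{\pi}_\delta(Z_\delta)$ --- one obtains
\[
\bar{\pi}_\delta(P)=\bigl(\kappa_2-\alpha\kappa_1:\kappa_1\bigr),
\]
the image of $\bar{\pi}_\delta(P)$ under the isomorphism $T_\alpha\xrightarrow{\ \sim\ }\proj^1$ given by the $x$-coordinate (cf.\ the description of the Abel--Jacobi image in \cite[\S7.6]{Cassels-Flynn}); note that this rational map has no base points on $Z_\delta$. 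It therefore remains to identify $\kappa_1\circ\bar{\pi}_\delta$ and $\kappa_2\circ\bar{\pi}_\delta$ with $f_6 Q_3^{(\delta)}$ and $-(f_5 Q_3^{(\delta)}+f_6 Q_4^{(\delta)})$ as quadratic forms on $\proj(L)$, up to a common nonzero scalar; substituting these into the display yields exactly the asserted formula.

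Next I would reduce to the untwisted case $\delta=1$. Over a separable closure choose $\eps\in(L^s)^*$ with $\eps^2=\delta$, so that $H$ is also the quadratic attached to $(\eps\xi)^2$ and, as recalled in the proof of Theorem~\ref{thm:embedding}, $\bar{\pi}_\delta$ factors as $[2]\circ g$ with $g$ multiplication by $\eps$. Since $R$ is the matrix of multiplication by $X$ on $L$ in the monomial basis, $T$ is the change-of-basis matrix from the basis $g_1,\dots,g_6$ to the monomial basis, and a short computation with the matrices $R$ and $T$ of Definition~\ref{def:kummer} gives $TR^{\mathsf T}=RT$, it follows that $T$ conjugates every multiplication operator to its transpose; combined with the identity $\sum_{i=0}^5 f_6 d_i R^{i+j}T=f_6 R^j\delta(R)T$ this yields $Q_j^{(\delta)}(\xi)=Q_j^{(1)}(\eps\xi)$ for all $j$. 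So it is enough to prove the identity of quadratic forms when $\delta=1$.

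The last --- and hardest --- step is to match the first two Cassels--Flynn coordinates of $\bar{\pi}_1\colon\YY_1\to\KK$ with $f_6 Q_3^{(1)}$ and $-(f_5 Q_3^{(1)}+f_6 Q_4^{(1)})$. Here I would unwind the constructions of \cite[\S4]{FTvL12}: one checks that $Q_j^{(1)}(\xi)=f_6\langle X^j,\xi^2\bmod f\rangle$, where $\langle\,,\rangle$ is the multiplication-invariant symmetric pairing on $L$ with Gram matrix $T^{-1}$ in the monomial basis (invariance being again equivalent to $TR^{\mathsf T}=RT$), so that each $Q_j^{(1)}$ is a fixed linear functional of $\xi^2\bmod f$; moreover, using $f_6 R^6 T=-\sum_{p=0}^5 f_p R^p T$ together with $Q_0^{(1)}=Q_1^{(1)}=Q_2^{(1)}=0$ on $\YY_1$, every $Q_j^{(1)}$ with $j\ge 3$ restricts on $\YY_1$ to a $k$-linear combination of $Q_3^{(1)},Q_4^{(1)},Q_5^{(1)}$, which therefore recover the coefficients of $H$ up to an invertible linear change. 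Comparing this with the $\kappa_1$- and $\kappa_2$-components of the duplication map pins down the two combinations and the common scalar. This is the one place requiring genuine (if mechanical) bookkeeping with the Flynn--Testa--van Luijk normalizations, and it can be verified directly --- and cross-checked against the accompanying Magma and Sage code.

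Finally, the identity of quadratic forms, once established on the dense open subset of $Z_\delta$ consisting of points outside the indeterminacy locus of $p_\delta$, extends to all of $Z_\delta$: the curve $Z_\delta$ is smooth and projective (Proposition~\ref{prop:smooth}), so both sides of the claimed formula define morphisms $Z_\delta\to\proj^1$, and two morphisms from a variety to $\proj^1$ agreeing on a dense open subset agree everywhere.
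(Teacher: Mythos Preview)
Your approach is correct and rests on the same core observation as the paper's proof: for $\xi\in L$ lifting $P\in Z_\delta$, the quadratic $H$ with $\delta\xi^2\equiv H\pmod f$ has $\alpha$ as one root, and $\bar{\pi}_\delta(P)$ is the other. Where the two proofs differ is in how they extract the coefficients of $H$ in terms of the $Q_j^{(\delta)}$.

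The paper avoids both your reduction to $\delta=1$ and your passage through the Cassels--Flynn coordinates $\kappa_i$. Instead it invokes directly the quadratic forms $C_j^{(\delta)}\in\Sym^2(\hat L)$ of \cite[\S4]{FTvL12}, defined by $C_j^{(\delta)}(z)=p_j(\delta z^2)$, so that by construction $\delta\xi^2\equiv\sum_j C_j^{(\delta)}(\xi)X^j\pmod f$; the single matrix identity
\[
f_6\,(C_0^{(\delta)},\dots,C_5^{(\delta)})=(Q_0^{(\delta)},\dots,Q_5^{(\delta)})\,T
\]
then gives, after using $Q_0^{(\delta)}=Q_1^{(\delta)}=Q_2^{(\delta)}=0$ on $\YY_\delta$, both that $C_3^{(\delta)}=C_4^{(\delta)}=C_5^{(\delta)}=0$ on $\YY_\delta$ (so $H$ really has degree $\le 2$) and that $f_6 C_2^{(\delta)}=f_6 Q_3^{(\delta)}$, $f_6 C_1^{(\delta)}=f_5 Q_3^{(\delta)}+f_6 Q_4^{(\delta)}$. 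The formula for the other root then drops out by Vieta. In other words, what you call ``the hardest step'' and defer to bookkeeping is, in the paper's organization, a two-line read-off from the second and third columns of $T$; and the step you spend effort on --- reducing to $\delta=1$ via $Q_j^{(\delta)}(\xi)=Q_j^{(1)}(\eps\xi)$ --- is unnecessary because the $C_j^{(\delta)}$ and the matrix identity are already uniform in $\delta$. Your pairing description $Q_j^{(1)}(\xi)=f_6\langle X^j,\xi^2\rangle$ with Gram matrix $T^{-1}$ is a correct reformulation of that same identity, just viewed from the other side.
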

\begin{proof}
As in \cite[§4]{FTvL12}, let $C_0^{(\delta)}, \dots, C_5^{(\delta)} \in \Sym^2(\hat{L})$ be quadratic forms such that $C_j^{(\delta)}(z) = p_j(\delta z^2)$ for $z \in L$, where $p_j$ gives the coefficient of $X^j$. We have
\[
f_6 \cdot \begin{pmatrix}
  C_0^{(\delta)} & C_1^{(\delta)} & \dots & C_5^{(\delta)}
\end{pmatrix} = \begin{pmatrix}
  Q_0^{(\delta)} & Q_1^{(\delta)} & \dots & Q_5^{(\delta)}
\end{pmatrix} \cdot T,
\]
where $T$ is the matrix defined in Definition \ref{def:kummer}, so that in particular
\begin{align*}
  f_6 C_1^{(\delta)} &= f_2 Q_0^{(\delta)} + f_3 Q_1^{(\delta)} + f_4 Q_2^{(\delta)} + f_5 Q_3^{(\delta)} + f_6 Q_4^{(\delta)}, \\
  f_6 C_2^{(\delta)} &= f_3 Q_0^{(\delta)} + f_4 Q_1^{(\delta)} + f_5 Q_2^{(\delta)} + f_6 Q_3^{(\delta)}.
\end{align*}
Thus, taking into account that $Q_j^{(\delta)}$ vanishes on $\YY_\delta$ for $j \in \{0, 1, 2\}$, we have
\[
( -(f_5 + f_6 \alpha) Q_3^{(\delta)}(P) - f_6 Q_4^{(\delta)}(P) : f_6 Q_3^{(\delta)}(P) ) = ( -C_1^{(\delta)}(P) - \alpha C_2^{(\delta)}(P) : C_2^{(\delta)}(P) ).
\]
Moreover, $C_3^{(\delta)} = C_4^{(\delta)} = C_5^{(\delta)} = 0$ on $\YY_\delta$.

Let $\xi \in L$ be a lift of $P \in Z_\delta \subset \proj(L)$. By construction of $\YY_\delta$, we have
\[
\delta \xi^2 \equiv C_2^{(\delta)}(\xi) X^2 + C_1^{(\delta)}(\xi) X + C_0^{(\delta)}(\xi) \pmod{f}.
\]
As explained in the proof of Theorem \ref{thm:embedding}, the roots of this quadratic polynomial are the $x$-coordinates of points of the divisor in $J$ corresponding to $\bar{\pi}_\delta(P)$. Moreover, since $P \in Z_\delta$, one of these roots is $\alpha$. Thus, in the affine patch where the second coordinate of $\proj^1$ is nonzero, writing $\bar{\pi}_\delta(P) = (r : 1)$, we have
\[
C_2^{(\delta)}(\xi) X^2 + C_1^{(\delta)}(\xi) X + C_0^{(\delta)}(\xi) = c (X - \alpha) (X - r)
\]
for some nonzero $c \in k^s$. Comparing coefficients, we obtain $C_2^{(\delta)}(\xi) = c$ and $C_1^{(\delta)}(\xi) = -c(\alpha + r)$, so
\[
r = \frac{-C_1^{(\delta)}(\xi) - \alpha C_2^{(\delta)}(\xi)}{C_2^{(\delta)}(\xi)}.
\]
This gives the desired formula for $\bar{\pi}_\delta(P)$. Finally, we have $\bar{\pi}_\delta(P) = (1 : 0)$ if and only if $C_2^{(\delta)}(\xi) = 0$, completing the proof.
\end{proof}

\begin{theorem}
  \label{thm:ramification}
  Let $\Omega \subset k^s$ be the set of roots of $f$. The branch locus of $\bar{\pi}_\delta\colon Z_\delta \to \proj^1$ is $\Omega \setminus \{\alpha\}$. For each $\omega \in \Omega \setminus \{\alpha\}$, we have
  \[
  \bar{\pi}_\delta^{-1}(\omega) = Z_\delta \cap \proj(\ker(\ev_\omega)) \subset \proj(L),
  \]
  which consists of $8$ geometric points, each of ramification index $2$.
\end{theorem}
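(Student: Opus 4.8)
The plan is to analyze the map $\bar\pi_\delta$ via the explicit formula of Theorem~\ref{thm:map} together with the structure established in the proofs of Theorems~\ref{thm:embedding} and~\ref{thm:map}. Recall that for a point $P \in Z_\delta$ with lift $\xi \in L$, the divisor class in $J$ corresponding to $\bar\pi_\delta(P)$ has $x$-coordinates equal to the roots of the quadratic $H_\xi(X) = C_2^{(\delta)}(\xi) X^2 + C_1^{(\delta)}(\xi) X + C_0^{(\delta)}(\xi)$, reduced modulo $f$ from $\delta\xi^2$; and since $P \in Z_\delta$ we have $\xi(\alpha) = 0$, so $\alpha$ is always one of these roots, with the other being $\bar\pi_\delta(P)$ itself. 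First I would observe that $\bar\pi_\delta$ has degree $8$: indeed $Z_\delta$ has genus $5$, the cover $Z_\delta \to \proj^1$ can be checked to have degree $8$ directly from the formula (the $Q_j^{(\delta)}$ are quadrics and $Z_\delta$ is a degree-$8$ curve in $\proj^4$), or more conceptually from the fact that $W_\delta \to C$ has degree $16$ and $W_\delta \to Z_\delta$ has degree $2$. So a generic fiber consists of $8$ points, and by Riemann--Hurwitz ($2\cdot 5 - 2 = 8(2\cdot 0 - 2) + \deg\mathfrak{d}$) the ramification divisor has degree $24$; if all ramification is simple (index $2$), there are exactly $24$ ramification points.

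The key step is to identify the fiber over $\omega \in \Omega$. For $\omega \neq \alpha$, I would argue as follows: a point $P$ lies in $\bar\pi_\delta^{-1}(\omega)$ if and only if the quadratic $H_\xi$ has $\omega$ as a root in addition to $\alpha$, i.e. $H_\xi(X)$ is a scalar multiple of $(X-\alpha)(X-\omega)$. Since $\delta\xi^2 \equiv H_\xi \pmod f$, evaluating the congruence $\delta\xi^2 = H_\xi + (\text{multiple of } f)$ at $X = \omega$ (which is legitimate since $\omega$ is a root of $f$) gives $\delta(\omega)\xi(\omega)^2 = H_\xi(\omega)$. Thus $H_\xi(\omega) = 0 \iff \xi(\omega) = 0$ (using $\delta \in L^*$, so $\delta(\omega) \neq 0$), exactly as in the proof of Theorem~\ref{thm:embedding} but with $\omega$ in place of $\alpha$. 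Hence $\bar\pi_\delta^{-1}(\omega) = Z_\delta \cap \proj(\ker(\ev_\omega))$, which is the intersection of the canonical curve $Z_\delta \subset \proj^4$ with a hyperplane, hence a divisor of degree $8$ on $Z_\delta$. To see this divisor is $2D$ for an effective divisor $D$ of degree $4$ — equivalently, that every point in the fiber is ramified of index $2$ — I would use the geometric description: a point of $\bar\pi_\delta^{-1}(\omega)$ corresponds (over $k^s$) to a divisor class $[2D']$ on $J$ whose image under the Abel--Jacobi-type map hits the point of $C$ with $x$-coordinate $\omega$, which is a Weierstrass point $(\omega,0)$; ramification of $\bar\pi_\delta$ at such a point reflects that pulling back a Weierstrass point under the duplication structure doubles. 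Concretely, the quadratic $H_\xi$ has a double root precisely when $\alpha = \omega$, so for $\omega \neq \alpha$ the two roots are distinct and the fiber point is a genuine preimage; the index-$2$ ramification comes from the fact that $\bar\pi_\delta$ factors (étale-locally, away from the exceptional locus) through the quotient $W_\delta \to Z_\delta$, whose branch points map to the Weierstrass points, combined with the fact that $[2]\colon J \to J$ restricted to $W_\delta \to C$ is unramified. Assembling: the fibers over the five points $\Omega \setminus \{\alpha\}$ contribute $5 \cdot 8 = 40$ points counted without multiplicity, i.e. $5 \cdot 4 = 20$ distinct points each of index $2$, giving ramification degree $5 \cdot (8 - 4) = 20$; the remaining $4$ units of the total ramification degree $24$ must come from elsewhere.

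At this point I would need to account for the discrepancy: $24 - 20 = 4$. Re-examining, the fiber over $\infty = (1:0) \in \proj^1$ is where $C_2^{(\delta)}(\xi) = Q_3^{(\delta)}(P) = 0$ (intersecting $Z_\delta$ with another hyperplane section, degree $8$), and this is the locus where the duplication divisor meets the points at infinity of $C$ — so I would check whether $\bar\pi_\delta$ is also (partially) ramified over $\infty$, or equivalently whether the branch locus is genuinely only $\Omega \setminus \{\alpha\}$ as claimed; the theorem statement asserts the branch locus is exactly $\Omega \setminus \{\alpha\}$, so in fact the $\infty$-fiber and the $\alpha$-"fiber" behavior must be unramified, and I would instead recount: Riemann--Hurwitz with all ramification simple over a set $B$ of branch points gives $8 = -16 + |B|\cdot 4$, so $|B| = 6$ — but $|\Omega \setminus \{\alpha\}| = 5$. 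This forces me to reconsider: either $\bar\pi_\delta$ has degree other than $8$, or there is additional ramification. Given the theorem says the branch locus is the $5$-element set $\Omega \setminus \{\alpha\}$ with all ramification index $2$ and fibers of size $8$ there, Riemann--Hurwitz reads $2g(Z_\delta) - 2 = 8 \cdot (-2) + 5 \cdot 4 = 4$, giving $g(Z_\delta) = 3$ — contradicting genus $5$. \emph{The main obstacle} is therefore reconciling the numerics: I expect the resolution is that $\bar\pi_\delta$ has degree $8$ but is \emph{not} the full story of a genus-$5$ cover of $\proj^1$ — rather $Z_\delta$ may carry the $\bar\pi_\delta$-fibers with the $\alpha$-locus itself also being special, OR (most likely) the correct reading is $\deg \bar\pi_\delta = 8$ with ramification only over $\Omega \setminus \{\alpha\}$ being index $2$ but with the ramification divisor having degree $2g-2 - 8(-2) = 8 + 16 = 24$ still requiring $6$ simple branch points, so the curve $Z_\delta$ must also be branched over $\alpha$ and/or $\infty$ after all — meaning I should prove the cleaner statement that $\bar\pi_\delta^{-1}(\omega) = Z_\delta \cap \proj(\ker(\ev_\omega))$ is $2D_\omega$ for each $\omega \in \Omega\setminus\{\alpha\}$ via the evaluation argument above, deduce index-$2$ ramification at the $20$ points lying over $\Omega \setminus \{\alpha\}$, and separately verify via the explicit quadric formulas (a direct computation I would delegate to Magma/Sage as the paper's code permits) that there is no other ramification, with the genus-$5$ Riemann--Hurwitz balance being restored by a more careful count of how the hyperplane section $Z_\delta \cap \proj(\ker(\ev_\omega))$ distributes (e.g. it may be $8$ \emph{distinct} points that happen to pair up into $4$ index-$2$ ramification points only because two sheets of a degree-$16$-flavored structure coincide). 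The honest plan: prove the fiber identification by the clean modular/evaluation argument, prove index-$2$ ramification by the local structure of $W_\delta \to Z_\delta \to C$ over Weierstrass points of $C$, and close the argument by Riemann--Hurwitz once the correct degree and the absence of other branch points are pinned down by the explicit formula of Theorem~\ref{thm:map}.
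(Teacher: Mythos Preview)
Your fiber identification via the evaluation argument is correct and matches the paper exactly: for $\omega\in\Omega\setminus\{\alpha\}$, evaluating $\delta\xi^2\equiv H_\xi\pmod f$ at $X=\omega$ gives $\delta(\omega)\xi(\omega)^2=H_\xi(\omega)$, so $\bar\pi_\delta(P)=\omega$ iff $\ev_\omega(\xi)=0$. That part is fine.

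The genuine gap is your computation of $\deg\bar\pi_\delta$. You claim it is $8$, but it is $16$. From the diagram $i\circ\pi_\delta=\bar\pi_\delta\circ p_\delta$ one gets $\deg(i)\cdot\deg(\pi_\delta)=\deg(\bar\pi_\delta)\cdot\deg(p_\delta)$, i.e.\ $2\cdot 16=\deg(\bar\pi_\delta)\cdot 2$, so $\deg\bar\pi_\delta=16$. Equivalently, from the formula of Theorem~\ref{thm:map} the map is given by a pair of \emph{quadrics}, so a general fiber is $Z_\delta$ intersected with a quadric hypersurface, of degree $8\cdot 2=16$ (not a hyperplane section of degree $8$). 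With the correct degree, Riemann--Hurwitz reads
\[
2\cdot 5-2 \;=\; 16\cdot(-2)+\deg(\mathfrak d),\qquad \deg(\mathfrak d)=40,
\]
and $5$ branch points each contributing $8$ simple ramification points give exactly $5\cdot 8=40$. There is no discrepancy, no extra branching over $\alpha$ or $\infty$, and no need to invoke a computer check. The hyperplane section $Z_\delta\cap\proj(\ker\ev_\omega)$ has degree $8$ as you say, but it is the \emph{reduced} fiber: each of its $8$ geometric points appears with multiplicity $2$ in the scheme-theoretic fiber, accounting for the degree-$16$ pullback.

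The paper's proof bypasses Riemann--Hurwitz entirely and reads the ramification directly off the commutative square: $\pi_\delta\colon W_\delta\to C$ is \'etale; $i\colon C\to\proj^1$ is ramified exactly over $\Omega$ with all indices $2$; and $p_\delta\colon W_\delta\to Z_\delta$ is ramified exactly over $\bar\pi_\delta^{-1}(\alpha)$ with all indices $2$ (since the involution on $W_\delta$ fixes precisely the preimages of $2$-torsion, which map to the base point $(\alpha,0)$). Comparing ramification indices along the two routes $W_\delta\to\proj^1$ forces $\bar\pi_\delta$ to be unramified over $\alpha$ (the index-$2$ ramification of $i$ is absorbed by $p_\delta$) and ramified of index $2$ over each $\omega\in\Omega\setminus\{\alpha\}$. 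This is shorter and more robust than your attempted count; once you fix the degree, your approach would also close, but the diagram argument is what you were gesturing at in your final paragraph and is the cleaner way to finish.
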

\begin{proof}
  Observe that $\pi_\delta\colon W_\delta \to C$ is \'etale, the branch locus of $i\colon C \to \proj^1$ is $\Omega$, and the branch locus of $p_\delta\colon W_\delta \to Z_\delta$ is $\bar{\pi}_\delta^{-1}(\alpha)$, with all ramification indices in the preimage of the branch locus equal to $2$. Thus, commutativity of diagram \eqref{eqn:main-diagram} implies that the branch locus of $\bar{\pi}_\delta$ is $\Omega \setminus \{\alpha\}$, and for each $\omega \in \Omega \setminus \{\alpha\}$, the preimage $\bar{\pi}_\delta^{-1}(\omega)$ consists of $8$ geometric points of ramification index $2$.

  The remaining claim that $\bar{\pi}_\delta^{-1}(\omega)$ is the hyperplane section of $Z_\delta$ given by intersection with $\proj(\ker(\ev_\omega))$ follows from the description of $\bar{\pi}_\delta$ given in the proofs of Theorems \ref{thm:embedding} and \ref{thm:map}: For $\xi \in L$ lifting a point $P \in Z_\delta$, we have $\bar{\pi}_\delta(P) = (\omega : 1)$ if and only if the quadratic polynomial defining $\delta \xi^2$ has roots $\alpha$ and $\omega$, which is equivalent to the condition $\xi(\alpha) = \xi(\omega) = 0$, i.e., $P$ is in the kernel of both the evaluation maps $\ev_\alpha$ (which defines $Z_\delta$ as a hyperplane section of $\YY_\delta$) and $\ev_\omega$, as was to be shown.
\end{proof}

\section{Maps to genus one curves}
\label{section:genus-one}
We now construct a map to a genus one curve through which the twisted duplication map factors, and prove that this map induces an isogeny from the Jacobian of $Z_\delta$ to the restriction of scalars of the Jacobian of this genus one curve. These genus one curves are geometrically Prym varieties \cite[Ch.~12]{BL04} associated to double coverings of $C$. This is a substantial motivation for the constructions of this paper, since a restriction of scalars of an elliptic curve is much more computationally accessible than a general Jacobian variety of the same dimension.

\begin{theorem}
\label{thm:genus-one}
Let $K = k(\omega)$, where $\omega \in k^s$ is a root of $f$ and $\omega \neq \alpha$. Write $f(x) = (x - \alpha) (x - \omega) h(x)$, let $H(x, z)$ be the homogenization of $h(x)$ with respect to $z$, and let $\beta_1, \beta_2, \beta_3, \beta_4 \in k^s$ be the roots of $h$. Let $Y_{\alpha, \omega} = \ev_{\beta_1} \cdot \ev_{\beta_2} \cdot \ev_{\beta_3} \cdot \ev_{\beta_4}$, where $\ev_{\beta_j} = \sum_{i=1}^{6} g_i(\beta_j) v_i$ is given by evaluation at $\beta_j$. (Note that $Y_{\alpha, \omega}$ is a quartic form over $K$.)

Define a curve $D_{\delta, \omega} \subset \proj(1, 2, 1)$ in weighted projective space by the equation
\[
Y_{\alpha, \omega}(\delta) y^2 = h(\alpha) H(x, z).
\]
Define a map $\varphi\colon Z_\delta \to \proj(1, 2, 1)$ over $K$ by
\[
\varphi(P) = \left( -(f_5 + f_6 \alpha) Q_3^{(\delta)}(P) - f_6 Q_4^{(\delta)}(P) : f_6^3 Y_{\alpha, \omega}(P) : f_6 Q_3^{(\delta)}(P) \right).
\]
Then the image of $\varphi$ is $D_{\delta, \omega}$, and the following diagram commutes:
\[
\xymatrix{
  Z_\delta \ar[r]^{\varphi} \ar[dr]_{\bar{\pi}_\delta} & D_{\delta, \omega} \ar[d]^{x} \\
  & \proj^1
}
\]
\end{theorem}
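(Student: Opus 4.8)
The plan is to verify the claimed factorization by working with a lift $\xi \in L$ of a point $P \in Z_\delta$ and tracking the relevant quadratic polynomial. Recall from the proofs of Theorems \ref{thm:embedding} and \ref{thm:map} that $\delta \xi^2 \equiv C_2^{(\delta)}(\xi) X^2 + C_1^{(\delta)}(\xi) X + C_0^{(\delta)}(\xi) \pmod{f}$, and that since $P \in Z_\delta$ one of the roots of this quadratic is $\alpha$; write the other root as $r$, so that $\bar\pi_\delta(P) = (r : 1)$ in the affine patch where $C_2^{(\delta)}(\xi) \neq 0$. The commutativity of the triangle is then immediate from Theorem \ref{thm:map} once we know $\varphi(P)$ lands in $\proj(1,2,1)$ with the stated first and third coordinates, since the map labeled $x$ is just $(u : w : t) \mapsto (u : t)$, matching the first and third coordinates of $\varphi$ to the formula in Theorem \ref{thm:map}.

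The substantive content is therefore showing that $\varphi(P)$ actually lies on $D_{\delta,\omega}$, i.e., that
\[
Y_{\alpha,\omega}(\delta) \cdot \bigl(f_6^3 Y_{\alpha,\omega}(\xi)\bigr)^2 = h(\alpha) \cdot H\!\bigl(-(f_5 + f_6\alpha)Q_3^{(\delta)}(\xi) - f_6 Q_4^{(\delta)}(\xi),\; f_6 Q_3^{(\delta)}(\xi)\bigr)
\]
up to the appropriate scaling. First I would rewrite the right-hand argument using the identities from the proof of Theorem \ref{thm:map}: after dividing by $f_6 Q_3^{(\delta)}(\xi)$ in each coordinate, $H$ evaluated there becomes (a scalar times) $h(r) = h$ evaluated at the non-$\alpha$ root of $\delta\xi^2 \bmod f$. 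The key algebraic fact I expect to need is that $Y_{\alpha,\omega}(\xi) = \prod_{j} \ev_{\beta_j}(\xi) = \prod_j \xi(\beta_j)$, so that $Y_{\alpha,\omega}(\delta) Y_{\alpha,\omega}(\xi)^2 = \prod_j \delta(\beta_j)\xi(\beta_j)^2$. Since $\delta\xi^2 \equiv C_2^{(\delta)}(\xi)(X-\alpha)(X-r) \pmod f$ and $f(X) = (X-\alpha)(X-\omega)h(X)$ with the $\beta_j$ the roots of $h$, evaluating the congruence at each $\beta_j$ gives $\delta(\beta_j)\xi(\beta_j)^2 = C_2^{(\delta)}(\xi)(\beta_j - \alpha)(\beta_j - r)$. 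Taking the product over $j$ and using $\prod_j(\beta_j - \alpha) = h(\alpha)/(\text{leading coeff})$ and $\prod_j(\beta_j - r) = h(r)/(\text{leading coeff})$ (with the leading coefficient of $h$ being $f_6$), the product telescopes into the desired relation between $Y_{\alpha,\omega}(\delta)Y_{\alpha,\omega}(\xi)^2$, $h(\alpha)$, $h(r)$, and a power of $C_2^{(\delta)}(\xi)$.

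The main obstacle will be bookkeeping the scaling factors: matching the weights $(1,2,1)$ of the weighted projective space against the homogeneity degrees of $Q_3^{(\delta)}$, $Q_4^{(\delta)}$ (degree $2$ in $\xi$) and $Y_{\alpha,\omega}$ (degree $4$ in $\xi$), and pinning down the exact powers of $f_6$ and of $C_2^{(\delta)}(\xi)$ so that both sides of the defining equation of $D_{\delta,\omega}$ agree on the nose rather than merely up to a constant. I would handle this by first establishing the identity projectively (up to scalar), which suffices to conclude that the image of $\varphi$ is contained in $D_{\delta,\omega}$, then checking surjectivity onto $D_{\delta,\omega}$ by a dimension/degree count or by exhibiting the generic fiber of $x\colon D_{\delta,\omega}\to\proj^1$ as matching a generic fiber of $\bar\pi_\delta$ (both of degree $2$, by Theorem \ref{thm:map} and the shape of the equation $Y_{\alpha,\omega}(\delta)y^2 = h(\alpha)H(x,z)$). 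Finally, I would remark that the translation from the quantities $C_j^{(\delta)}$ back to the $Q_j^{(\delta)}$ appearing in the stated formula is exactly the linear change encoded by the matrix $T$ in Definition \ref{def:kummer}, as already used in the proof of Theorem \ref{thm:map}, so no new computation is required there.
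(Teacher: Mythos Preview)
Your approach is essentially the paper's: both evaluate the congruence $\delta\xi^2 \equiv C_2^{(\delta)}(\xi)(X-\alpha)(X-r) \pmod f$ at the roots $\beta_j$ of $h$ and multiply, exploiting that $Y_{\alpha,\omega}(\xi)=\prod_j \xi(\beta_j)$ (the paper phrases this as multiplicativity of $Y_{\alpha,\omega}$). Two small corrections would tighten your sketch. First, establishing the identity only ``up to scalar'' does \emph{not} by itself give containment of $\varphi(Z_\delta)$ in $D_{\delta,\omega}$; the constant must be pinned down to be exactly $1$, which the paper does by a direct computation with $f_6\delta\xi^2=(X-\alpha)\bigl(f_6 Q_3^{(\delta)}(\xi)X+(f_5+f_6\alpha)Q_3^{(\delta)}(\xi)+f_6 Q_4^{(\delta)}(\xi)\bigr)$ that tracks all the $f_6$ factors at once (the Remark following the theorem notes that one can alternatively compare values at a single point, which is closer to what you outline). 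Second, surjectivity is cheaper than a degree or fiber count: once $\varphi$ is non-constant and $D_{\delta,\omega}$ is an irreducible curve, the image is automatically all of $D_{\delta,\omega}$.
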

\begin{proof}
  Since $\ev_{\beta_j}$ is a ring homomorphism for each $j$, the quartic form $Y_{\alpha, \omega}$ is multiplicative with respect to $L$, i.e., $Y_{\alpha, \omega}(\xi \eta) = Y_{\alpha, \omega}(\xi) Y_{\alpha, \omega}(\eta)$ for all $\xi, \eta \in L$. As proved in Theorem \ref{thm:map}, for all $\xi \in L^s$ lifting a point $P \in Z_\delta(k^s)$, we have 
  \[
  f_6 \delta \xi^2 = (X - \alpha) \left( f_6 Q_3^{(\delta)}(\xi) X + (f_5 + f_6 \alpha) Q_3^{(\delta)}(\xi) + f_6 Q_4^{(\delta)}(\xi) \right).
  \]
  Putting these together, we obtain
  \begin{align*}
    &Y_{\alpha, \omega}(\delta) (f_6^3 Y_{\alpha, \omega}(\xi))^2 = f_6^6 Y_{\alpha, \omega}(\delta \xi^2) = f_6^2 Y_{\alpha, \omega}(f_6 \delta \xi^2) \\
    &= f_6 \prod_{j=1}^{4} (\beta_j - \alpha) \cdot f_6 \prod_{j=1}^{4} \left( f_6 Q_3^{(\delta)}(\xi) \beta_j + (f_5 + f_6 \alpha) Q_3^{(\delta)}(\xi) + f_6 Q_4^{(\delta)}(\xi) \right) \\
    &= h(\alpha) H(-(f_5 + f_6 \alpha) Q_3^{(\delta)}(\xi) - f_6 Q_4^{(\delta)}(\xi), f_6 Q_3^{(\delta)}(\xi)).
  \end{align*}
  Thus $\varphi(Z_\delta) \subseteq D_{\delta, \omega}$. Since $\varphi$ is non-constant and $D_{\delta, \omega}$ is an irreducible curve, $\varphi(Z_\delta) = D_{\delta, \omega}$. Commutativity of the diagram is immediate from the formulas.
\end{proof}

\begin{remark}
  Theorem \ref{thm:ramification} gives another perspective on Theorem \ref{thm:genus-one} in terms of divisors: Denote $\varphi = (\varphi_x : \varphi_y : \varphi_z)$. By Theorem \ref{thm:ramification}, for each root $\beta$ of $h$,
  \[
  \bar{\pi}_\delta^*( (\beta) - (\infty) ) = \divis(\ev_\beta^2/\varphi_z).
  \]
  Consider the rational functions $R = \varphi_x/\varphi_z$ and $S = \varphi_y/\varphi_z^2$. Then
  \[
  \divis(h \circ R) = \bar{\pi}_\delta^*(\divis(h)) = \bar{\pi}_\delta^*( (\beta_1) + (\beta_2) + (\beta_3) + (\beta_4) - 4 (\infty) ) = \divis(S^2).
  \]
  So $S^2$ is a scalar multiple of $h \circ R$; comparing their values at any point outside the divisor of zeroes and poles yields Theorem \ref{thm:genus-one}. (This is how the author initially discovered the formulas.)
\end{remark}

\begin{remark}
  \label{remark:elliptic-curves}
  If $D_{\delta, \omega}(K)$ is empty, then so is $Z_\delta(K)$. If $D_{\delta, \omega}(K)$ is nonempty, then $D_{\delta, \omega}$ is isomorphic to an elliptic curve $E_\delta = \Jac(D_{\delta, \omega})$ over $K$. In the latter case, if $k = \QQ$, then Theorem \ref{thm:genus-one} provides exactly the requisite data to compute $Z_\delta(\QQ)$ using the elliptic Chabauty method, provided that we can compute generators for the Mordell--Weil group $E_\delta(K)$ and that the rank of $E_\delta(K)$ is less than $[K : \QQ]$.

  One can find an upper bound on the rank of $E_\delta(K)$ by computing the $2$-Selmer group (and this is the method we use in the examples of the next section). This requires computing the class group of $K[x]/\langle \eta_\delta(x) \rangle$, where we write $E_\delta\colon y^2 = \eta_\delta(x)$. This is often computationally expensive unless we assume Bach's bound \cite{Bach90} on the norm of prime ideals needed to generate the class group, which is conditional on the generalized Riemann hypothesis (GRH). However, since varying $\delta$ only changes $D_{\delta, \omega}$ by a quadratic twist, the elliptic curves $E_\delta$ also only differ by a quadratic twist, so the quotient algebra $K[x]/\langle \eta_\delta(x) \rangle$ does not depend on $\delta$. Thus, the expensive class group computation need only be carried out once for the whole twist family, rather than for each twist individually.
\end{remark}

We now relate the above genus one curves to the Jacobian of $Z_\delta$.

\begin{theorem}
  \label{thm:jacobian}
  Let $g(x) = f(x)/(x - \alpha)$, let $B = k[w]/\langle g(w) \rangle$, let $K_1, \dots, K_r$ be fields over $k$ such that $B \cong K_1 \times \dots \times K_r$, and let $\omega_i$ be the image of $w$ in $K_i$ for each $i$. Let $D_\delta = \coprod_{i=1}^{r} D_{\delta, \omega_i}$ be the curve from Theorem \ref{thm:genus-one} considered as a curve over $B$, let $\varphi\colon Z_\delta \to D_\delta$ be the corresponding morphism over $B$, and let $E_\delta = \Jac(D_\delta) = \coprod_{i=1}^{r} \Jac(D_{\delta, \omega_i})$. Then the induced $k$-morphism of abelian varieties
  \[
  \Jac(Z_\delta) \to \Res_k^B(E_\delta) \cong \prod_{i=1}^{r} \Res_k^{K_i}(\Jac(D_{\delta, \omega_i}))
  \]
  is an isogeny.
\end{theorem}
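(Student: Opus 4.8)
The plan is to reduce the claim to a dimension count plus a statement that the induced map on Jacobians is nonzero on each factor, and then to verify both. First I would observe that $\Jac(Z_\delta)$ has dimension $5$ by Proposition \ref{prop:smooth}, and that $\Res_k^B(E_\delta) \cong \prod_i \Res_k^{K_i}(\Jac(D_{\delta,\omega_i}))$ has dimension $\sum_i [K_i : k] \cdot 1 = \dim_k B = \deg g = 5$, since $g(x) = f(x)/(x-\alpha)$ has degree $5$ and is separable ($f$ is squarefree). So the source and target are abelian varieties of the same dimension, and it suffices to show the morphism is an isogeny after base change to $k^s$, where $B \otimes_k k^s \cong (k^s)^5$ indexed by the five roots $\omega$ of $g$; equivalently, it suffices to show the $k^s$-morphism $\Jac(Z_\delta) \to \prod_{\omega} \Jac(D_{\delta,\omega}^{k^s})$ has finite kernel, i.e. is surjective with finite kernel since the dimensions match.

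Next I would unwind the construction functorially: for each root $\omega$ of $g$, Theorem \ref{thm:genus-one} gives a nonconstant $K$-morphism $\varphi_\omega \colon Z_\delta \to D_{\delta,\omega}$, which by Albanese functoriality induces a morphism $\varphi_{\omega,*} \colon \Jac(Z_\delta) \to \Jac(D_{\delta,\omega})$ (after base change to $K$); assembling these over the five roots and using the adjunction $\Hom_K(\Jac(Z_\delta)_K, \Jac(D_{\delta,\omega})) = \Hom_k(\Jac(Z_\delta), \Res_k^K \Jac(D_{\delta,\omega}))$ gives the map in the statement. Since an isogeny between abelian varieties of equal dimension is the same as a surjection, and a surjection onto a product is the same as surjectivity onto each factor, it is enough to show that each $\varphi_{\omega,*}$ is surjective, i.e. nonzero (as $\Jac(D_{\delta,\omega})$ is an elliptic curve, hence simple). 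But $\varphi_{\omega,*}$ is zero only if $\varphi_\omega$ factors through a point, contradicting that $\varphi_\omega$ is nonconstant by Theorem \ref{thm:genus-one}. Hence each $\varphi_{\omega,*}$ is surjective.

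The one remaining gap is that surjectivity onto each factor does not by itself force the \emph{combined} map $\Jac(Z_\delta) \to \prod_\omega \Jac(D_{\delta,\omega}^{k^s})$ to have finite kernel — a priori the image could be a proper abelian subvariety projecting onto each factor. To rule this out I would use the composition with $\bar\pi_\delta$. By the commuting triangle in Theorem \ref{thm:genus-one}, the composite $x \circ \varphi_\omega \colon Z_\delta \to \proj^1$ equals $\bar\pi_\delta$ independently of $\omega$, so that comparison alone is not enough; instead I would argue via the ramification description in Theorem \ref{thm:ramification}. The five hyperplane sections $Z_\delta \cap \proj(\ker \ev_{\omega})$ (over the five roots $\omega$ of $g$) are the fibers $\bar\pi_\delta^{-1}(\omega)$, and the pullback map $\bar\pi_\delta^*$ together with the behavior of $\varphi_\omega$ on these divisors shows that the classes $\varphi_{\omega,*}$ are ``independent'': concretely, the induced map on cotangent spaces at the origin, $\bigoplus_\omega \operatorname{Lie}(\Jac D_{\delta,\omega})^\vee \to \operatorname{Lie}(\Jac Z_\delta)^\vee = H^0(Z_\delta, \Omega^1)$, sends a generator of the $\omega$-summand to $\varphi_\omega^*(dx/y)$, and these five differentials are linearly independent in $H^0(Z_\delta,\Omega^1)$ — which one checks from the explicit canonical embedding of $Z_\delta$ as the complete intersection of quadrics (Theorem \ref{thm:embedding}) and the formula for $\varphi$, since $\varphi_\omega^*(dx/y)$ is, up to scalar, $\ev_{\beta}$-type linear forms pulled back, and the five corresponding hyperplanes $\ker \ev_\omega$ span $\proj(L)^\vee$ as $\omega$ ranges over the roots of $g$ together with the relation cutting out $Z_\delta$. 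Linear independence of these five differentials means the map on Lie algebras is an isomorphism (both sides have dimension $5$), hence the morphism of abelian varieties is étale at the origin and therefore an isogeny.

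I expect the main obstacle to be exactly this last point — proving that the five differentials $\varphi_\omega^*(dx/y) \in H^0(Z_\delta, \Omega^1)$ are linearly independent, equivalently that the map on tangent spaces at the origin is an isomorphism. Everything else (the dimension count, Albanese functoriality, the adjunction with restriction of scalars, and nonvanishing on each factor) is formal. If the direct linear-algebra computation with the canonical model proves unwieldy, an alternative is a Galois-descent argument: the five $\varphi_\omega$ are Galois-conjugate, so the span of their images is a $\Gal(k^s/k)$-stable abelian subvariety $\mathcal{A} \subseteq \Jac(Z_\delta)$ defined over $k$; then show $\mathcal{A} = \Jac(Z_\delta)$ by checking, say, that $H^0(Z_\delta,\Omega^1)$ is generated as a $k[\Gal]$-module (or via the $L$-algebra structure) by the image of the $\varphi_\omega^*(dx/y)$, using that $\bar\pi_\delta^*\Omega^1_{\proj^1}(\text{poles at }\Omega)$ already exhausts a $5$-dimensional space of differentials by Riemann–Hurwitz. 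Either route closes the argument.
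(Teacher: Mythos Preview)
Your approach is different from the paper's. The paper argues by deformation: it sets up the map $\Phi\colon\JJ\to\Acal$ in a family over the space $S$ parametrizing triples $(g,\alpha,\delta)$, uses flatness results to show that $\dim\ker\Phi$ is locally constant on $S$, and then checks a single convenient fiber---one where $g$ splits completely and the five resulting elliptic curves $E_i$ are pairwise non-isogenous. At such a fiber, surjectivity onto each factor (which you also prove) forces the image to contain five mutually non-isogenous elliptic curves, hence to have dimension at least $5$, so $\Phi_s$ is an isogeny there and therefore everywhere. This sidesteps any direct computation with differentials on $Z_\delta$, at the cost of invoking the machinery of flat families and passing to a possibly larger base field.

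Your cotangent-space route works too, and the gap you flag can be closed using Theorem~\ref{thm:ramification}---though your sketch misidentifies the relevant linear form. The pullback $\varphi_\omega^*(dx/y)$ corresponds to $\ev_\omega$, not to the $\ev_{\beta_j}$: since $dx/y$ has trivial divisor on the genus~$1$ curve $D_{\delta,\omega}$, one has $\divis(\varphi_\omega^*(dx/y)) = \mathrm{Ram}(\varphi_\omega)$, and comparing ramification indices in the factorization $\bar\pi_\delta = x\circ\varphi_\omega$ shows that $\mathrm{Ram}(\varphi_\omega)$ is exactly the reduced fiber $\bar\pi_\delta^{-1}(\omega) = Z_\delta\cap\proj(\ker\ev_\omega)$, i.e., the hyperplane section cut by $\ev_\omega$. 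The five restrictions $\ev_\omega|_{\ker\ev_\alpha}$ for $\omega\in\Omega\setminus\{\alpha\}$ are linearly independent (Vandermonde), so the five differentials are as well, and the map on Lie algebras is an isomorphism. Your Galois alternative, by contrast, does not close the gap as stated: Galois-stability of the image does not by itself rule out a proper stable abelian subvariety, so you would still need an independence statement underneath it.
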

\begin{proof}
  Our strategy is to consider universal families of curves and abelian varieties corresponding to the above situation, observe that the properties of interest are deformation-invariant, and deform the problem to a more computationally tractable case.

  Let $S = \Spec A$ be the space parametrizing triples $(g, \alpha, \delta) \in k[w] \times k \times k[X]$ such that $g$ is a monic squarefree quintic polynomial with $g(\alpha) \neq 0$, the degree of $\delta$ is at most $5$, and $\delta$ is invertible modulo $(X - \alpha) \cdot g(X)$. Let $P \in A[w]$ be the generic monic quintic polynomial, and let $T = \Spec A[w]/\langle P(w) \rangle$. Let $\Zcal \to S$ and $\DD \to T$ be the relative curves whose fibers above a point $(g, \alpha, \delta) \in S$ are the genus $5$ curve $Z_\delta$ and the genus $1$ curve $D_\delta$, respectively, that are associated to the twisting parameter $\delta$ for the hyperelliptic curve $y^2 = (x - \alpha) g(x)$. Let $\JJ \to S$ be the relative Jacobian variety of $\Zcal$, and let $\Acal = \Res_S^T(\Jac(\DD))$, which exists as a scheme since $T \to S$ is \'etale.

  The formulas of Theorem \ref{thm:genus-one} define a $T$-morphism $\Zcal \times_S T \to \DD$, which induces a homomorphism of abelian $S$-schemes $\Phi\colon \JJ \to \Acal$. By \cite[Lemma 6.12]{GIT}, the homomorphism $\JJ \to \Phi(\JJ)$ is flat. The kernel $\ker(\Phi)$ is the fiber product of $\Phi$ with the unit section $S \to \Acal$, so $\ker(\Phi)$ is a flat proper $S$-group scheme since flatness and properness are preserved by base change. By \cite[Expos\'{e} VI\textsubscript{B}, Cor.~4.3]{SGA3-1}, since $S$ is also connected, the fibers of the map $\ker(\Phi) \to S$ all have the same dimension. Moreover, if $\ker(\Phi) \to S$ has relative dimension zero, then $\ker(\Phi)$ is a finite flat $S$-group scheme by \cite[Thm.~8.11.1]{EGA-IV.3}. Thus, we can compute the relative dimension of $\Phi$ on any fiber, and if $\Phi$ is an isogeny, we can also compute its degree on any fiber.

  Let $g \in k[w]$ such that $s := (g, 0, 1) \in S(k)$ and $g$ splits completely over $k$. Let $\omega_1, \dots, \omega_5$ be the roots of $g$. By functoriality of restriction of scalars,
  \[
  \Acal_s \cong \Res_k^{k^5}(\Jac(\DD_s)) \cong \prod_{i=1}^{5} E_i,
  \]
  where $E_i$ is the Jacobian of the genus $1$ curve defined by $y^2 = -\omega_i^{-1} g(0) g(x)/(x - \omega_i)$. Furthermore, choose $g$ so that the elliptic curves $E_i$ are pairwise non-isogenous. (If no such polynomial $g$ is defined over $k$, it is harmless to extend scalars to a larger field, since this preserves both dimension and degree.)

  The composition of the map $\Phi_s\colon \JJ_s \to \prod_{i=1}^{5} E_i$ with any of the five projection maps $\prod_{i=1}^{5} E_i \to E_j$ is induced by the map $\varphi$ of Theorem \ref{thm:genus-one} (with $\omega = \omega_i$), hence is surjective. Thus, the image of $\Phi_s$ contains an elliptic curve isogenous to $E_j$ for each $j$. Since the $E_j$ are pairwise non-isogenous, this implies that $\Phi_s$ is surjective. Since $\dim \JJ_s = 5$, this means $\Phi_s$ is an isogeny.
\end{proof}

\begin{remark}
  An analytic computation using Magma's algorithms for period matrices of Riemann surfaces shows that in characteristic zero, up to numerical error, $\Jac(Z_\delta)$ is isogenous to $\Res_k^B(E_\delta)$ via a degree $32$ isogeny. The above proof shows that it suffices to compute the degree for any one example, and we then apply the algorithms to the example $f(x) = \prod_{\omega=-2}^{3} (x - \omega)$. Given big period matrices $P_1$ and $P_2$ of the corresponding Riemann surfaces, the \texttt{IsIsogenousPeriodMatrices} function in Magma computes matrices $M \in M_5(\CC)$ and $N \in M_{10}(\ZZ)$ such that $MP_1 = P_2 N$. This defines an isogeny of degree $\det(N)$ between the corresponding complex tori; we compute $\det(N) = 32$ for this example.
\end{remark}

\section{Results}
\label{section:results}

Using Magma v2.26-10 and SageMath 9.3 on Boston University's Shared Computing Cluster \cite{SCC}, a heterogeneous Linux-based computing cluster with approximately 21000 cores, the above algorithms were applied to all 7692 genus 2 curves over $\QQ$ in \cite{genus2-database} that have at least one rational Weierstrass point and Mordell--Weil rank at least 2. Each of these curves has Mordell--Weil rank 2 or 3, so Chabauty's method \cite{Chabauty41, MP12} is not directly applicable. Table \ref{tab:outcomes} summarizes the results.

\begin{table}[ht]
  \centering
  \caption{Outcomes of running the code on the dataset of 7692 genus 2 curves.}
  \begin{tabular}{l|r|r}
    \textbf{Outcome} & \textbf{Count} & \textbf{Percent} \\
    \hline
    Success & 1045 & 13.6\% \\
    Apparent failure of Hasse principle & 2120 & 27.6\% \\
    Mordell--Weil rank too high & 802 & 10.4\% \\
    Unable to compute Mordell--Weil group & 2271 & 29.5\% \\
    Exceeded time or memory limits & 1685 & 21.9\% \\
    Miscellaneous error & 19 & 0.2\%
  \end{tabular}
  \label{tab:outcomes}
\end{table}

By \enquote{apparent failure of the Hasse principle}, we mean that one of the genus 5 covering curves $Z_\delta$ is locally solvable, but a point search did not find any rational points on it. Note that the counts add up to more than 7692 because multiple obstructions were found for some curves---for example, a genus 5 curve might map to two different elliptic curves, one of which has too high rank and the other for which Magma cannot compute the Mordell--Weil group.

The raw data is publicly available on GitHub \cite{Hast22data}. The data is in the format of a JSON file for each curve, containing the results of the computation as well as the necessary data to reproduce some of the intermediate steps. (This data includes, for example, coefficients of all curves constructed, as well as coordinates of generators of any Mordell--Weil groups computed.)

The computations of Mordell--Weil groups of Jacobians, and hence the results on rational points on curves, are conditional on GRH. Additionally, since Magma's implementation of elliptic curve arithmetic over $p$-adic fields is not fully numerically stable, we cannot entirely rule out the possibility of an error in precision tracking that compromises the correctness of the computation; however, such errors, even if theoretically possible, are highly unlikely to occur in practice, as this would require unfortunate numerical coincidences at a high degree of precision. At such time as numerically stable $p$-adic elliptic curve arithmetic is implemented in Magma, the computations could be re-run to rule out this possibility.

The runtime and memory requirements seem hard to predict for any given curve, so a time limit of several hours and a memory limit of 8 GB of RAM was set for each curve. Processes that exceeded these limits were terminated. For curves where the computation completed successfully, runtimes appeared to follow a long-tail distribution (Figure \ref{fig:runtimes}); the median runtime was 529 seconds, and the mean was 1145 seconds. For curves where a Mordell--Weil group could not be provably computed (but without timing out) or was found to have too high rank, the distribution of runtimes was similar: median 581 seconds and mean 1250 seconds.

\begin{figure}[ht]
  \centering
  \caption{Histogram of runtimes (in minutes) for the curves where the method succeeded in computing the set of rational points.}
  \includegraphics[scale=0.7]{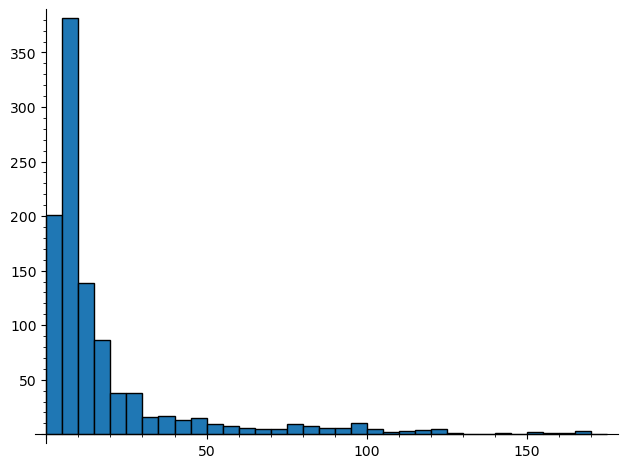}
  \label{fig:runtimes}
\end{figure}

Interestingly, while the success rate decreased for curves with larger discriminant, the average runtimes in the cases where the method succeeded did not appear to significantly increase with the discriminant. Rather, the majority of this decrease was due to an increase in failures of the Hasse principle (see Figure \ref{fig:disc}).

\begin{figure}[ht]
  \centering
  \caption{Portion of curves for which the method succeeded (blue) or encountered an apparent failure of the Hasse principle (red), plotted against the discriminant of the curve (grouped into 10 bins of width $10^5$).}
  \includegraphics[scale=0.7]{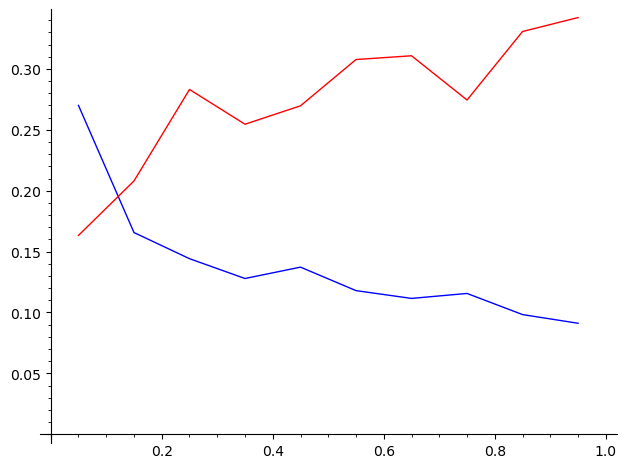}
  \label{fig:disc}
\end{figure}

To reduce the computational resources required, the code was designed to terminate for a given curve as soon as certain obstructions to the success of the computation were detected. Hence, for example, Mordell--Weil groups were not computed when there is an apparent failure of the Hasse principle, so the runtimes for such curves are typically much shorter: a mean of 35 seconds, a median of 17 seconds, and only three such curves having a runtime over 10 minutes.

We also make some observations about the number and height of points on the 4748 genus 5 curves $Z_\delta$ associated to the 1045 genus 2 curves where the method succeeded. The largest cardinality of $Z_\delta(\QQ)$ observed was $6$; the full distribution is shown in Table \ref{tab:point-count}.

\begin{table}[ht]
  \centering
  \caption{Distribution of cardinalities of $Z_\delta(\QQ)$.}
  \begin{tabular}{c|r|r}
    $\#Z_\delta(\QQ)$ & \textbf{Count} & \textbf{Percent} \\
    \hline
    0 & 1136 & 23.9\% \\
    1 & 1602 & 33.7\% \\
    2 & 1531 & 32.2\% \\
    3 & 326 & 6.9\% \\
    4 & 128 & 2.7\% \\
    5 & 18 & 0.4\% \\
    6 & 7 & 0.1\%
  \end{tabular}
  \label{tab:point-count}
\end{table}

We can also analyze the maximum $H_{\mathrm{max}}$ of the naive heights $H(P)$ of points $P \in Z_\delta(\QQ)$ with $Z_\delta$ associated to a genus 2 curve $C$ as above. Among the same set of 1045 genus 2 curves, the median value of the largest coordinate was $16$; the arithmetic and geometric means were approximately $739.8$ and $20.2$, respectively, suggesting a long-tail distribution. The statistic $H_{\mathrm{max}}$ appears to increase gradually with the absolute discriminant $\Delta$ of $C$: a Pearson correlation test on a log-log plot yields a correlation coefficient of $r \approx 0.094$ ($p \approx 0.0023$); see Figure \ref{fig:heights}.

\begin{figure}[ht]
  \centering
  \caption{Log-log plot (base 10) of the absolute discriminant $\Delta$ ($x$-axis) versus the maximum naive height $H_{\mathrm{max}}$ of points in $Z_\delta(\QQ)$ ($y$-axis).}
  \includegraphics[scale=0.7]{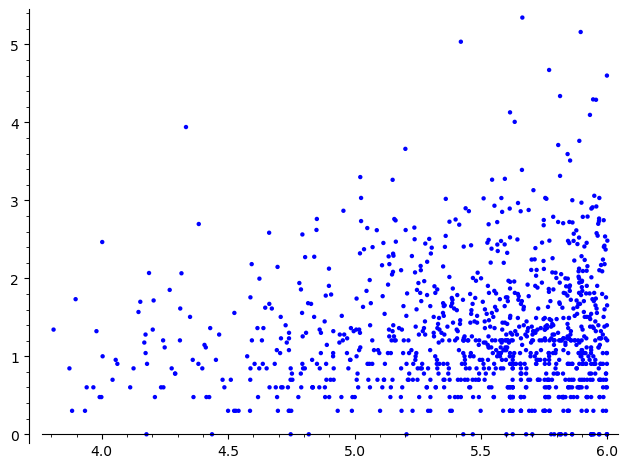}
  \label{fig:heights}
\end{figure}

Let us further note what sort of progress would be necessary to handle the remaining cases:
\begin{enumerate}
  \item In cases where a curve $Z_\delta$ is found to be locally solvable but no rational points can be found, a method of verifying failure of the Hasse principle (such as an implementation of the Mordell--Weil sieve for such curves) would be necessary to proceed.
  \item If one of the elliptic curves has rank greater than or equal to the degree of its base field, then Chabauty's method cannot be applied. In some such cases, Kim's non-abelian generalization of Chabauty's method \cite{Kim09} might be a promising approach.
  \item If Magma is unable to provably compute the Mordell--Weil group of an elliptic curve over a number field within the allotted time, then either an unknown amount more computation time or further advances in descent algorithms for elliptic curves over number fields would be required.
  \item In a small number of cases, either a local solvability test or elliptic Chabauty exceeded the time or memory limits for unclear reasons.
  \item In a handful of cases, Magma threw an exception that suggests a bug in the internal codebase of Magma.
\end{enumerate}

A few more computational remarks:
\begin{enumerate}
    \setcounter{enumi}{5}
  \item If we do not assume GRH, the bottleneck is provably computing the class group of a degree $15$ number field in order to bound the $2$-Selmer rank of the elliptic curves, and this rapidly becomes computationally infeasible as the discriminant grows. (We do carry out the unconditional computation in the first example of the next section.)
  \item When we assume GRH, most of the time is spent either on computing the Mordell--Weil groups of the elliptic curves or on the elliptic Chabauty method.
  \item We use a singular planar model of the curves to quickly test local solvability. Using Proposition \ref{prop:good-reduction}, we only need to check local solvability at the primes of bad reduction of $C$, primes $p \leq 97$ (for which the Hasse--Weil lower bound $\#Z_\delta(\FF_p) \geq p + 1 - 10 \sqrt{p}$ (cf.\ \cite{Milne16}) is non-positive), and the real place. For determining the existence of real points, we use the algorithm of \cite[§4]{SW99}.
\end{enumerate}

\section{Examples}
\label{section:examples}
Let us illustrate the results of the previous sections by examining several examples of successes and failures in detail. The data for the examples in this section was generated using the batch script \texttt{paper-examples.sh} in \cite{Hast22code}; the raw data is available at \cite{Hast22data} in the “examples” folder.

\begin{theorem}
  \label{thm:example1}
  Let $C$ be the genus $2$ curve with LMFDB label \href{https://www.lmfdb.org/Genus2Curve/Q/6443/a/6443/1}{\texttt{6443.a.6443.1}}, which has minimal weighted projective equation
  \[
  C\colon y^2 + z^3 y = x^5 z - x^4 z^2 - 2x^3 z^3 + x^2 z^4 + x z^5.
  \]
  The set of rational points $C(\QQ)$ is
  \begin{gather*}
  \{(1 : 0 : 0), (0 : 0 : 1), (-1 : 0 : 1), (0 : -1 : 1), (1 : 0 : 1), (-1 : -1 : 1), \\
  (1 : -1 : 1), (2 : 2 : 1), (2 : -3 : 1), (-3 : 6 : 4), (-3 : -70 : 4)\}.
  \end{gather*}
\end{theorem}
\begin{proof}
  The change of coordinates $(x : y : z) \mapsto (z : 2y + z^3 : x)$ yields the model
  \[
  y^2 = x^6 + 4x^5 z + 4x^4 z^2 - 8x^3 z^3 - 4x^2 z^4 + 4x z^5,
  \]
  which has a rational Weierstrass point at $(0 : 0 : 1)$. Let $J$ be the Jacobian of $C$. Computing the Mordell--Weil group $J(\QQ)$ in Magma, we find it is free of rank $2$, and applying the Cassels map to representatives of each element of $J(\QQ)/2J(\QQ)$, we obtain four twist parameters $\delta_1, \delta_2, \delta_3, \delta_4$, each corresponding to a genus $5$ curve $Z_\delta$ as in Theorem \ref{thm:embedding}.

  We compute using Magma that $Z_{\delta_4}$ is not locally solvable at $2$, so $Z_{\delta_4}(\QQ) = \emptyset$. For each $i = 1, 2, 3$, we can find a rational point on $Z_{\delta_i}$, so we obtain a map to an elliptic curve $Z_{\delta_i} \to E_i$ over $K = \QQ(\omega)$ (where $\omega$ is a root of $g$), as in Theorem \ref{thm:genus-one}.

  We then compute the Mordell--Weil group of each $E_i$ and apply the elliptic Chabauty method to provably compute the set of $K$-points of each $E_i$ whose image under the given map to $\proj^1$ is rational. To make the computation more efficient, we first compute all four Mordell--Weil groups under the assumption of GRH (which is only used to make class group computations faster), and take note of the number field $F$ whose class group we need to compute, along with the conditionally proven value of its class number $h_F$. By Remark \ref{remark:elliptic-curves}, the number field $F$ and the class number $h_F$ do not depend on $\delta$. Then we compute $h_F$ unconditionally. The results are summarized in Table \ref{tab:example1}.

  \begin{table}[ht]
    \centering
    \caption{Results for genus 2 curve \texttt{6443.a.6443.1}.}
    \begin{tabular}{c|>{\small}c|c|c|c}
       & \normalsize$\delta$ & ELS & $D_{\delta, \omega}(K)$ & $\#Z_\delta(\QQ)$ \\
      \hline
      $\delta_1$ & $1$ & yes & $\ZZ^4$ & $2$ \\
      $\delta_2$ & $X^2 + X - 1$ & yes & $\ZZ^3$ & $3$ \\
      $\delta_3$ & $X^5 + 4X^4 + 4X^3 - 8X^2 - 5X + 4$ & yes & $\ZZ^3$ & $2$ \\
      $\delta_4$ & $-X^5 - 4X^4 - 5X^3 + 7X^2 + 5X - 4$ & no (2) & --- & $0$
    \end{tabular}
    \label{tab:example1}
  \end{table}

  The \enquote{ELS} column indicates whether $Z_\delta$ is everywhere locally solvable, and if not, gives a prime $p$ such that $Z_\delta(\QQ_p) = \emptyset$. The number field whose class group is computed has defining polynomial $x^{15} - 3x^{14} + 15x^{13} - 60x^{12} + 267x^{11} - 1337x^{10} + 2375x^9 - 1676x^8 + 2625x^7 - 4167x^6 - 2687x^5 + 10176x^4 - 4556x^3 - 2616x^2 + 1238x + 406$ over $\QQ$; this field was verified in 24177 seconds to have class number $2$. The other parts of the computation took 1195 seconds in total.

  Next, we apply the map $\bar{\pi}_\delta\colon Z_\delta \to \proj^1$ to each point $P \in Z_\delta(\QQ)$:
  \begin{align*}
    \bar{\pi}_1( (0 : 0 : 0 : 0 : 1) ) &= 0, & \bar{\pi}_{\delta_2}( (22 : 13 : 8 : 2 : 2) ) &= -60/59, \\
    \bar{\pi}_1( (-1 : 0 : -1 : 0 : 2) ) &= 1/2, & \bar{\pi}_{\delta_3}( (1 : 0 : 0 : 0 : 0) ) &= \infty, \\
    \bar{\pi}_{\delta_2}( (2 : 1 : 1 : 0 : 2) ) &= 1, & \bar{\pi}_{\delta_3}( (3 : 2 : 2 : 0 : 4) ) &= -1. \\
    \bar{\pi}_{\delta_2}( (8 : 5 : 4 : 2 : 2) ) &= -4/3,
  \end{align*}
  (Note: we view $Z_\delta$ as embedded in $\proj^4$ with coordinates $v_1, \dots, v_5$. Since $\gamma_6 = f_6 \neq 0$, we can always reconstruct $v_6$ from this information using Theorem \ref{thm:embedding}.) Inverting the change of coordinates on $C$, we see that the set of possible $x$-coordinates of rational points of $C$ is
  \[
  \{\infty, 2, 1, -3/4, -59/60, 0, -1\}.
  \]
  The Weierstrass point lies above $\infty$, and there are two rational points above each of $2, 1, -3/4, 0, -1$, accounting for all $11$ known points in $C(\QQ)$. The two points of $C$ above $-59/60$ are not rational.
\end{proof}

\begin{theorem}
  \label{thm:example2}
  Let $C$ be the genus $2$ curve with LMFDB label \href{https://www.lmfdb.org/Genus2Curve/Q/141991/b/141991/1}{\texttt{141991.b.141991.1}}, which has minimal weighted projective equation
  \[
  C\colon y^2 + (x^2 z + x z^2 + z^3) y = x^5 z - 2x^4 z^2 - 2x^3 z^3 + x^2 z^4.
  \]
  Assuming GRH, the set of rational points $C(\QQ)$ is
  \begin{gather*}
    \{(1 : 0 : 0), (0 : 0 : 1), (-1 : 0 : 1), (0 : -1 : 1), (-1 : -1 : 1), (1 : -1 : 1), \\
    (1 : -2 : 1), (2 : -3 : 1), (2 : -4 : 1), (-1 : 6 : 4), (1 : 6 : 9), (3 : -22 : 4), \\
    (-1 : -58 : 4), (3 : -126 : 4), (1 : -825 : 9)\}.
  \end{gather*}
\end{theorem}
\begin{proof}
  The proof strategy is the same as in the previous example. The change of coordinates $(x : y : z) \mapsto (z : 2y + x^2 z + x z^2 + z^3 : x)$ yields the model
  \[
  y^2 = x^6 + 2x^5 z + 7x^4 z^2 - 6x^3 z^3 - 7x^2 z^4 + 4xz^5,
  \]
  which has a rational Weierstrass point at $(0 : 0 : 1)$. In this case, the Jacobian of $C$ has Mordell--Weil group $\ZZ^3$, so there are $8$ twists to consider. Of these, three have no $\QQ_2$-points and hence no $\QQ$-points, and the rest all have a rational point of low height and are amenable to elliptic Chabauty (with the upper bounds on Mordell--Weil ranks conditional on GRH). The results are summarized in Table \ref{tab:example2}.

  \begin{table}[ht]
    \centering
    \caption{Results for genus 2 curve \texttt{141991.b.141991.1}.}
    \begin{tabular}{c|>{\small}c|c|c|c}
       & \normalsize$\delta$ & ELS & $D_{\delta, \omega}(K)$ & $\#Z_\delta(\QQ)$ \\
      \hline
      $\delta_1$ & $1$ & yes & $\ZZ^3$ & $1$ \\
      $\delta_2$ & $X^2 - 1$ & yes & $\ZZ$\hphantom{${}^1$} & $1$ \\
      $\delta_3$ & $X^5 + 2X^4 + 7X^3 - 5X^2 - 8X + 4$ & yes & $\ZZ^3$ & $2$ \\
      $\delta_4$ & $-X^5 - X^4 - 8X^3 + 5X^2 + 8X - 4$ & yes & $\ZZ^3$ & $2$ \\
      $\delta_5$ & $X^5 + 2X^4 + 7X^3 - 6X^2 - 8X + 4$ & yes & $\ZZ^3$ & $3$ \\
      $\delta_6$ & $-5X^5 - 7X^4 - 27X^3 + 23X^2 + 28X - 16$ & no (2) & --- & $0$ \\
      $\delta_7$ & $4X^5 + 8X^4 + 27X^3 - 23X^2 - 28X + 16$ & no (2) & --- & $0$ \\
      $\delta_8$ & $-X^5 - 2X^4 - 8X^3 + 6X^2 + 8X - 4$ & no (2) & --- & $0$
    \end{tabular}
    \label{tab:example2}
  \end{table}

  The total computation time required was 894 seconds. The number field $F$ whose class group computation depends on GRH has defining polynomial $x^{15} + 6x^{14} + 21x^{13} + 88x^{12} + 212x^{11} + 332x^{10} + 1198x^9 + 3248x^8 + 1626x^7 - 8560x^6 - 3892x^5 - 68524x^4 - 315439x^3 - 494742x^2 - 69455x + 384152$ over $\QQ$, and the class number is $2$ assuming the Bach bound. Verifying this class number would remove the dependence on GRH.

  We apply the map $\bar{\pi}_\delta$ to each point $P \in Z_\delta(\QQ)$:
  \begin{align*}
    \bar{\pi}_1( (0 : 0 : 0 : 0 : 1) ) &= 0, & \bar{\pi}_{\delta_4}( (207 : 82 : 124 : 46 : 106) ) &= 3361/3215, \\
    \bar{\pi}_{\delta_2}( (0 : -1 : 0 : -1 : 1) ) &= 1/2, & \bar{\pi}_{\delta_5}( (1 : 0 : 0 : 0 : 0) ) &= \infty, \\
    \bar{\pi}_{\delta_3}( (1 : 0 : 0 : 0 : 0) ) &= 1, & \bar{\pi}_{\delta_5}( (1 : 1 : 0 : 1 : 1) ) &= 4/3, \\
    \bar{\pi}_{\delta_3}( (-1 : 2 : 2 : 4 : 2) ) &= 9, & \bar{\pi}_{\delta_5}( (2 : 1 : 1 : 0 : 1) ) &= -4. \\
    \bar{\pi}_{\delta_4}( (3 : 4 : 4 : 4 : 4) ) &= -1,
  \end{align*}
  Inverting the change of coordinates, the possible $x$-coordinates for rational points of $C$ are
  \[
  \{\infty, 2, 1, 1/9, -1, 3215/3361, 0, 3/4, -1/4\}.
  \]
  There is the rational Weierstrass point above $\infty$, no rational points above $3215/3361$, and two rational points above each of the others, yielding exactly the $15$ known rational points.
\end{proof}

Now we present a few examples illustrating obstacles the method can encounter.

\begin{example}[Probable failure of the Hasse principle]
  Let $C$ be the genus 2 curve with LMFDB label \href{https://www.lmfdb.org/Genus2Curve/Q/10681/a/117491/1}{\texttt{10681.a.117491.1}}, which has a sextic Weierstrass model
  \[
  C\colon y^2 = 121x^6 - 308x^5 + 276x^4 - 92x^3 + 4x.
  \]
  We compute $J(\QQ) \cong \ZZ^2$. One of the twist parameters we obtain by applying the Cassels map to $J(\QQ)/2J(\QQ)$ is $\delta = -X + 1$. The corresponding genus 5 curve $Z_\delta$ is locally solvable, but the \texttt{PointSearch} function in Magma finds no points on $Z_\delta$ with a bound of $10^6$. (These computations took 15 seconds in total.) Thus, we are unable to provably compute $C(\QQ)$ unless we can prove that $Z_\delta(\QQ)$ is in fact empty.
\end{example}

\begin{example}[Too high rank for elliptic Chabauty]
  Let $C$ be the genus 2 curve with LMFDB label \href{https://www.lmfdb.org/Genus2Curve/Q/7403/a/7403/1}{\texttt{7403.a.7403.1}}, which has a sextic Weierstrass model
  \[
  C\colon y^2 = x^6 + 4x^5 - 4x^4 - 8x^3 + 4x^2 + 4x.
  \]
  We compute $J(\QQ) \cong \ZZ^2$. One of the twist parameters we obtain by applying the Cassels map to $J(\QQ)/2J(\QQ)$ is $\delta = x^5 + 4x^4 - 4x^3 - 7x^2 + 3x + 4$. The corresponding genus 5 curve $Z_\delta$ has three rational points of low height, one of which is $(1 : 0 : 0 : 0 : 0)$, and using this as a base point, we obtain a map $Z_\delta \to E$ defined over the quintic field $K = \QQ(\alpha)$ with $\alpha^5 + 4\alpha^4 - 4\alpha^3 - 8\alpha^2 + 4\alpha + 4 = 0$, where $E$ is the elliptic curve given by
  \begin{align*}
    y^2 &= x^3 + (2\alpha + 4)x^2 + (11\alpha^4 + 57\alpha^3 + 18\alpha^2 - 68\alpha - 34)x \\
    &\qquad+ (36\alpha^4 + 179\alpha^3 + 63\alpha^2 - 211\alpha - 115).
  \end{align*}
  Magma computes that $E(K)$ is free of rank $5$. Thus, we are unable to prove that the three known rational points of $Z_\delta$ are all of the rational points. These computations took 449 seconds in total.
\end{example}

\begin{example}[Unable to compute Mordell--Weil group]
  Let $C$ be the genus 2 curve with LMFDB label \href{https://www.lmfdb.org/Genus2Curve/Q/7211/a/7211/1}{\texttt{7211.a.7211.1}}, which has a sextic Weierstrass model
  \[
  C\colon y^2 = x^6 - 4x^4 + 10x^3 - 8x^2 + 1.
  \]
  We compute $J(\QQ) \cong \ZZ^2$. One of the twist parameters we obtain by applying the Cassels map to $J(\QQ)/2J(\QQ)$ is $\delta = -4x^5 - 4x^4 + 11x^3 - 26x^2 + 3x + 4$. The corresponding genus 5 curve $Z_\delta$ has rational point $(3 : -1 : -1 : -1 : 3)$, and using this as a base point, we obtain a map $Z_\delta \to E$ defined over the quintic field $K = \QQ(\alpha)$ with $\alpha^5 + \alpha^4 - 3\alpha^3 + 7\alpha^2 - \alpha - 1 = 0$, where $E$ is the elliptic curve
  \begin{align*}
    y^2 &= x^3 + (-9\alpha^4 - 13\alpha^3 + 21\alpha^2 - 54\alpha - 18)x^2 \\
    &\qquad + (73\alpha^4 + 110\alpha^3 - 163\alpha^2 + 428\alpha + 144)x \\
    &\qquad + (82336\alpha^4 + 124063\alpha^3 - 184134\alpha^2 + 483038\alpha + 162465).
  \end{align*}
  Magma can compute that the rank of $E(K)$ is at most 1; however, Magma was unable to either find any non-identity $K$-points on $E$ or prove that no such points exist. Thus, we are unable to prove that the list of known rational points of $Z_\delta$ is complete. These computations took 389 seconds in total.
\end{example}

\section*{Acknowledgements}
The author thanks Jennifer Balakrishnan, Raymond van Bommel, Noam Elkies, Brendan Hassett, Steffen M\"uller, Bjorn Poonen, Michael Stoll, Andrew Sutherland, John Voight, and several anonymous reviewers for helpful comments and conversations related to this paper.

\section*{Funding}
This work was supported by the Simons Collaboration on Arithmetic Geometry, Number Theory, and Computation (Simons Foundation grant \#550023).

\section*{Conflicts of interest statement}

The author asserts that there are no conflicts of interest.

\section*{Data availability statement}

The datasets generated as part of this work are available at \cite{Hast22data}. The code used to generate the datasets is available at \cite{Hast22code}.

\bibliography{two-cover-descent}

\end{document}